\newtheorem{thm}{Theorem}
\newtheorem{cor}[thm]{Corollary}
\newtheorem{lem}[thm]{Lemma}
\newtheorem{conj}[thm]{Conjecture}
\begin{document}
	
	\title{\Large On two cycles of consecutive even lengths}
	\author{
		Jun Gao\thanks{Extremal Combinatorics and Probability Group, Institute for Basic Science (IBS), Daejeon, South Korea. Research supported by the Institute for Basic Science (IBS-R029-C4). Email: gj950211@gmail.com, jungao@ibs.re.kr.}
		\and
		Binlong Li\thanks{School of Mathematics and Statistics, Northwestern Polytechnical University, Xi'an, Shaanxi 710072, P. R. China. Supported by National Natural Science Foundation of China grant 12071370 and Natural Science Foundation of Shaanxi Province (No. 2021JM-040). Email: binlongli@nwpu.edu.cn.}
		\and
		Jie Ma\thanks{School of Mathematical Sciences, University of Science and Technology of China, Hefei 230026, China.
			Research supported by the National Key R and D Program of China 2020YFA0713100, National Natural Science Foundation of China grant 12125106,
			Innovation Program for Quantum Science and Technology 2021ZD0302904, and Anhui Initiative in Quantum Information Technologies grant AHY150200.
			Email: jiema@ustc.edu.cn.}
		\and
		Tianying Xie\thanks{School of Mathematical Sciences, University of Science and Technology of China, Hefei 230026, China.
			Email: xiety@mail.ustc.edu.cn.}
	}
	
	\date{}
	\maketitle
	
	\begin{abstract}
		Bondy and Vince showed that every graph with minimum degree at least three contains two cycles of lengths differing by one or two.
		We prove the following average degree counterpart that
		every $n$-vertex graph $G$ with at least $\frac52(n-1)$ edges, unless $4|(n-1)$ and every block of $G$ is a clique $K_5$, contains two cycles of consecutive even lengths.
		Our proof is mainly based on structural analysis, and a crucial step which may be of independent interest shows that the same conclusion holds for every 3-connected graph with at least 6 vertices.
		This solves a special case of a conjecture of Verstra\"ete.
		The quantitative bound is tight and also provides the optimal extremal number for cycles of length two modulo four.
	\end{abstract}

	\section{Introduction}
	The distribution of cycle lengths in a graph has been widely studied in the literature.
	Er\H{o}ds \cite{Erd76,Erd92,Erd95,Erd97} posted many early problems on this topic and one of such problems (see \cite{BV}) asks whether every
	graph with minimum degree at least three contains two cycles whose lengths differ by one or two.
	This was proved by Bondy and Vince \cite{BV} in the following stronger statement.
	\begin{thm}[Bondy and Vince \cite{BV}]\label{BV:Theorem}
		With the exception of $K_1$ and $K_2$, every graph having at most two vertices of degree less than three contains two cycles of lengths differing by one or two.
	\end{thm}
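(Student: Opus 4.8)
The plan is to reduce $G$ to a clean structural core and then to squeeze two nearby cycle lengths out of a single extremal path.

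\emph{Reduction.} Since exhibiting two such cycles in any subgraph suffices (they remain cycles of $G$), I would first pass to a convenient subgraph. A graph meeting the hypothesis that happens to be a forest must be $K_1$ or $K_2$, because any other forest has at least three vertices of degree less than three; hence every admissible $G$ other than $K_1,K_2$ contains a cycle and therefore a $2$-connected block. I would work inside a $2$-connected end-block $B$ containing a vertex of degree at least three. The one subtlety is that the unique cut vertex of $B$ may have small degree in $B$, so $B$ could carry up to three low-degree vertices rather than two; keeping this bookkeeping under control---by choosing the end-block carefully, or by proving a slightly stronger statement that tolerates a bounded number of exceptional vertices---is the first thing to get right.

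\emph{The engine.} Inside this core I would take a longest path $P=v_0v_1\cdots v_\ell$; by maximality all neighbours of the endpoint $v_0$ lie on $P$. Let $m$ be the largest index with $v_0v_m\in E$, so that $C=v_0v_1\cdots v_mv_0$ is a cycle of length $L=m+1$. The key observation is that any chord $v_iv_j$ of $C$ creates a theta-subgraph, hence three cycles of lengths $L$, $k+1$, and $L-k+1$, where $k$ is the length of the shorter $v_i$--$v_j$ arc. Comparing $L-k+1$ with $L$ shows that a chord whose short arc has length $2$ or $3$ already yields two cycles of lengths $L,L-1$ or $L,L-2$; a near-bisecting chord, for which $|L-2k|\le 2$, wins instead through the pair $k+1$ and $L-k+1$. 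Thus I may assume every chord of $C$ is ``long'', i.e.\ $4\le k\le (L-3)/2$.

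\emph{Closing the remaining case, and the main obstacle.} In this rigid regime I would deploy the degree hypothesis: all but at most two or three vertices of $C$ have a neighbour other than their two path-neighbours, producing either a long chord or a path that leaves $C$ and re-enters it. A returning path of length $r$ between two arc-endpoints creates a cycle differing from an arc-cycle by $|r-k|$, so the freedom to vary $r$, or to apply P\'osa-type rotations that convert $C$ into a new fundamental cycle meeting a short chord, should be enough to force two cycle lengths within $2$ of one another. I expect the genuine difficulty to lie exactly here: excluding the configuration in which $C$ admits only long chords while every attachment is length-locked. Overcoming it should require choosing $P$, and then $C$, extremally and running a rotation argument to manufacture a short chord, all while treating the at most two (or three) low-degree vertices as bounded local exceptions.
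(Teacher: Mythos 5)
First, note that Theorem~\ref{BV:Theorem} is not proved in this paper at all: it is quoted from Bondy and Vince \cite{BV} and used as a black box, so there is no internal proof here to compare your attempt against. Judged on its own, your proposal is an opening rather than a proof. The reduction is fine (the bookkeeping you flag about a possible third low-degree vertex in an end-block is resolved by observing that the block tree has at least two end-blocks, so some end-block contains at most one of the two exceptional vertices among its non-cut vertices; this is exactly why one passes to the rooted statement ``$G+xy$ is $2$-connected and only $x,y$ may have degree less than $3$''), and the chord analysis on the cycle $C=v_0\cdots v_mv_0$ is correct as far as it goes. But the entire content of the theorem sits in the case you leave open. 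After discarding chords whose short arc has length $2$ or $3$ and near-bisecting chords, you are left with a long cycle all of whose chords and re-entering ears are ``length-locked,'' and at that point you offer only the hope that P\'osa-type rotations ``should be enough.'' That configuration is not excludable by rotations in any routine way; moreover, an excursion leaving $C$ at $v_i$ need not return to $C$ at all (it can re-enter $P$ beyond $v_m$, or form a bridge attached to several vertices of $C$), so even the dichotomy ``long chord or returning path of controllable length'' is not established. A proof whose final step is an expectation is a gap, and here the gap coincides with the theorem's actual difficulty.

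The way this difficulty is overcome, both in \cite{BV} and in the closely analogous Theorem~\ref{admissible: k=3} of this paper, is by a different mechanism: one proves the rooted two-path version (if $G+xy$ is $2$-connected and every vertex other than $x,y$ has degree at least $3$, then there are two $x$--$y$ paths whose lengths differ by one or two) by induction on the number of vertices, contracting the neighbourhood of $x$ into a single vertex or splitting along a block so that the degree hypothesis is inherited by the smaller graph, and then lifting the two paths back to $G$. Nothing in your sketch plays the role of that contraction/induction step. To complete the argument you would need either to import that induction or to supply a genuinely new device for the length-locked configuration; as written, the proposal does not close.
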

	There has been extensive research on generalizations (to $k$ cycles of consecutive lengths) and extensions of this result, see \cite{HS98,V00,Fan02,SV08,Ma16,LM18,GM20,GHLM,CY19} (just to mention a few).
	In terms of the minimum degree condition, the following result of \cite{GHLM} provides a good understanding on the distribution of consecutive cycle lengths
	that every graph with minimum degree at least $k+1$ contains $k$ cycles whose lengths form an arithmetic progression with common difference one or two.
	This is tight by considering the clique $K_{k+2}$ and complete bipartite graphs $K_{k+1,n}$.
	It is natural to consider the analogous problem under the average degree condition.
	In this direction, Verstra\"ete \cite{V16} made the following conjecture.
	
	\begin{conj}[Verstra\"ete \cite{V16}; see Conjecture X]\label{Conj-V}
		If $G$ is an $n$-vertex graph not containing $k$ cycles of consecutive even lengths, then $e(G)\leq \frac12(2k+1)(n-1)$, with equality only if every block of $G$ is a clique of order $2k+1$.
	\end{conj}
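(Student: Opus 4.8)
The plan is to prove the conjecture by peeling off the low‑connectivity structure of $G$ and reducing everything to a statement about highly connected dense graphs, which I then attack by a chord/ear analysis that keeps careful track of cycle lengths modulo $2$. Throughout, ``$k$ cycles of consecutive even lengths'' means cycles of lengths $2\ell,2\ell+2,\dots,2\ell+2(k-1)$ for some $\ell$.

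First I would reduce to $2$-connected graphs. We may assume $G$ is connected, since for a disconnected graph the bound applied componentwise gives $e(G)\le\frac12(2k+1)(n-c)$ with $c$ components, which is strictly below the threshold unless $c=1$. For connected $G$ the block--cut tree yields the identities $e(G)=\sum_B e(B)$ and $n-1=\sum_B(|B|-1)$, and every cycle lies inside a single block. Hence it suffices to establish the $2$-connected case: every $2$-connected $H$ with no $k$ cycles of consecutive even lengths satisfies $e(H)\le\frac12(2k+1)(|H|-1)$, with equality only for $H=K_{2k+1}$. Summing over blocks then delivers both the quantitative bound and the extremal characterisation for general $G$, the equality case forcing every block to attain equality and hence to be $K_{2k+1}$.

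Second, I would pass from $2$-connectivity to $3$-connectivity by induction on $|H|+e(H)$ via $2$-separations. Given a $2$-cut $\{u,v\}$, split $H$ into its two sides $H_1,H_2$ (each containing $u,v$) and add a virtual edge $uv$ to form smaller $2$-connected graphs $H_1^+,H_2^+$. A cycle through the virtual edge of $H_i^+$ corresponds to a cycle of $H$ in which $uv$ is replaced by a $u$--$v$ path in the other side; the delicate point is that this substitution shifts the length, and hence its parity, by the path length. Consequently the inductive hypothesis must be strengthened to record, for each side and each parity class, an \emph{interval} of realisable $u$--$v$ path lengths, and to combine these across the cut so that a run of consecutive even cycle lengths in one piece survives the substitution without creating gaps. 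This parity bookkeeping across $2$-separations is the first substantial obstacle, and pinning down exactly when no interval is long enough is what should surface $K_{2k+1}$ as the threshold configuration.

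Third, and most importantly, the $3$-connected core. Here one cannot hope for an edge‑free statement for general $k$: the graph $K_{3,m}$ is $3$-connected on arbitrarily many vertices yet realises even cycle lengths only $4$ and $6$, so for $k\ge 3$ the edge density must genuinely be exploited (note $K_{3,m}$ sits safely below the threshold). In a $3$-connected graph with $e\ge\frac12(2k+1)(n-1)$ the cycle rank is about $k(n-1)$, so after fixing a longest cycle $C$ there is a large supply of chords and ears; using $3$-connectivity to route three internally disjoint paths between well‑separated pairs, the aim is to assemble a family of internally disjoint $u$--$v$ paths whose lengths realise $k$ consecutive values of a single parity, yielding $k$ cycles of consecutive even lengths. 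The crux — and what I expect to be the main obstacle — is to extract the consecutive lengths \emph{exactly}, with neither arithmetic gaps nor parity defects, while simultaneously certifying that $K_{2k+1}$ is the only dense exception. Because the constant $\frac12(2k+1)$ is tight, every edge must be accounted for, so the usual average‑degree machinery, which finds long runs only up to constant‑factor losses, is insufficient; the essential new ingredient will be a length‑control lemma producing a run of achievable even lengths whose length matches the density precisely.
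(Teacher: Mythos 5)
There is a genuine gap --- in fact two, and they are exactly the places where all the difficulty of this problem lives. What you have written is a roadmap, not a proof: your first reduction (to $2$-connected graphs via the block--cut tree) is correct and is precisely how the paper begins, but your second and third steps are statements of intent rather than arguments. Note also that the statement you are addressing is an open conjecture: the paper proves only the case $k=2$ (its Theorem~\ref{main}), and the general case remains unresolved, so any complete proof would have to contain substantial new mathematics.

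Concretely: in your second step, the ``parity bookkeeping across $2$-separations'' is never formulated as a lemma, let alone proved. For $k=2$ the paper resolves exactly this point with Theorem~\ref{admissible: k=3}: under a degree-sum hypothesis (which must be \emph{earned} from the edge count by a minimal-counterexample argument --- another step your outline omits), one side of a $2$-cut $\{x,y\}$ contains two $x$--$y$ paths whose lengths differ by two; then either the other side has $x$--$y$ paths of both parities, or it is bipartite and Theorem~\ref{BV:Theorem} (Bondy--Vince) finishes, since a bipartite graph cannot have two cycle lengths differing by one. Your proposed ``interval of realisable path lengths'' invariant is plausible but unverified, and even stating the correct inductive strengthening for general $k$ is nontrivial --- the paper's Theorem~\ref{Thm:gen} is their candidate, and even its proof is only sketched there. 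In your third step, the ``length-control lemma'' is pure aspiration. For $k=2$ this is the paper's Theorem~\ref{3-connected}, whose proof occupies an entire section built on several structural lemmas (even cycles whose removal leaves the graph connected, quasi-diagonal vertices, and a delicate case analysis over disjoint or once-intersecting odd/even cycle pairs); nothing in your outline substitutes for that work, and for $k\ge 3$ your own (correct) $K_{3,m}$ observation shows the statement must change shape, since connectivity alone cannot suffice. Identifying the obstacles, as you have done, is not the same as overcoming them.
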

	
	It is easy to see that if true, the bound is best possible.
	The case $k=1$ is well known (i.e., any $n$-vertex graph with at least $\frac32(n-1)+1$ edges contains an even cycle; see \cite{DLS93}) and the other cases are still open.
	In this paper, we prove the case $k=2$ of this conjecture in the following result.
	\begin{thm}\label{main}
		Let $G$ be a graph of order $n$ with $e(G)\ge \frac{5}{2}(n-1)$. Then $G$ contains two cycles of consecutive even lengths, unless $4|(n-1)$ and every block of $G$ is a clique $K_5$.
	\end{thm}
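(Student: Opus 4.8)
The plan is to reduce the statement to a self-contained claim about $2$-connected, and ultimately $3$-connected, graphs, and then to attack the highly connected case directly. We may assume $G$ is connected, since for a disconnected graph the bound $\tfrac52(n-1)$ only loosens (the densest component already supplies more than $\tfrac52$ times one less than its order). First I would pass to the block decomposition: writing the blocks of $G$ as $B_1,\dots,B_t$ with $n_i=|V(B_i)|$, the block--cut tree yields the identity $\sum_i (n_i-1)=n-1$, while trivially $e(G)=\sum_i e(B_i)$. Since every cycle lies inside a single block, if any block contains two cycles of consecutive even lengths then so does $G$. Hence it suffices to prove the following local statement: every $2$-connected graph $B$ containing no two cycles of consecutive even lengths satisfies $e(B)\le \tfrac52(|V(B)|-1)$, with equality only for $B=K_5$. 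Granting this, and noting that the trivial blocks $K_1,K_2$ obey the bound (with strict slack for $K_2$), summing over all blocks gives $e(G)\le\tfrac52(n-1)$; equality forces every block to equal $K_5$, whence $n-1=4t$ and $4\mid(n-1)$. This is precisely the contrapositive of the theorem.

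To prove the local statement I would induct on $|V(B)|$. If $B$ is $3$-connected, the key input is the crucial $3$-connected case treated below: a $3$-connected graph on at least $6$ vertices always contains two cycles of consecutive even lengths, so the hypothesis is vacuous unless $|V(B)|\le 5$; the finitely many $3$-connected graphs on at most $5$ vertices are checked by hand, and among them only $K_5$ meets the bound with equality. If $B$ is $2$-connected but not $3$-connected, I would fix a $2$-separation with cut $\{x,y\}$ and sides $B_1,B_2$, and apply the induction hypothesis to the augmented pieces $B_i+xy$. The delicate point is the bookkeeping of the two virtual edges against the loss in the vertex count $(n_1-1)+(n_2-1)=n$, coupled with the parities of $x$--$y$ paths: each side being $2$-connected supplies internally disjoint $x$--$y$ paths, and if the two sides realize $x$--$y$ paths of the right lengths and parities one directly builds the two consecutive even cycles across the cut. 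Otherwise the parity profile of each side is so restricted that the inductive edge bound can be applied with enough room to preserve the $\tfrac52$-bound and to pin equality down to $K_5$.

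The heart of the argument, and the step I expect to be the main obstacle, is the $3$-connected case. The basic mechanism is the theta-graph observation: a subgraph consisting of two branch vertices joined by three internally disjoint paths of lengths $p_1,p_2,p_3$ carries cycles of lengths $p_1+p_2$, $p_1+p_3$, $p_2+p_3$, so if the three path-lengths all have the same parity and two of them differ by exactly $2$, then two of these cycles have consecutive even lengths. Thus the whole problem becomes that of producing, somewhere in $G$, three internally disjoint paths between two vertices whose lengths share a common parity and exhibit a gap of $2$. In a $3$-connected graph such triples of paths are abundant, but controlling their lengths and parities simultaneously is exactly what is hard. I would take a longest cycle $C$ and use $3$-connectivity to attach three disjoint paths from $V(G)\setminus V(C)$, or chords, onto $C$; this manufactures many theta- and prism-like configurations whose arc-lengths are constrained by the maximality of $C$. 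A careful case analysis on the distribution of the attachment points and on the parities of the resulting arcs should force the desired equal-parity, gap-$2$ triple, the only genuine escapes being the sporadic graphs on at most five vertices. Carrying out this parity-controlled analysis cleanly, without an explosion of cases, is the principal difficulty of the whole proof.
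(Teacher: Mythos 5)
Your high-level skeleton (reduce to $2$-connected pieces, split on whether the piece is $3$-connected, treat the $3$-connected case as the crux) matches the paper's architecture, but at both of the genuinely hard points your proposal stops at a heuristic, and at one of them the route you indicate does not close.

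The first gap is the $2$-connected, non-$3$-connected case. You propose to take a $2$-separation $\{x,y\}$ with sides $B_1,B_2$ and apply the inductive edge bound to $B_i+xy$. This faces two concrete obstacles. Numerically, $(n_1-1)+(n_2-1)=n$, so even if both pieces satisfy $e(B_i+xy)\le\frac52(n_i-1)$ you only get $e(B)\le\frac52 n-2=\frac52(n-1)+\frac12$, which is not the bound you need. Structurally, the pieces $B_i+xy$ may contain two cycles of consecutive even lengths that pass through the virtual edge $xy$ and therefore do not correspond to cycles of $B$, so the induction hypothesis cannot be invoked on them at all; your fallback, that ``the parity profile of each side is so restricted that the inductive edge bound can be applied with enough room,'' is precisely the content that needs proving and is not supplied. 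The paper does something different here: it takes a minimum counterexample, uses minimality to derive degree conditions ($\delta(G)\ge3$ and $d(u)+d(v)\ge7$ for every edge $uv$), and then proves a standalone theorem (Theorem~\ref{admissible: k=3}) that under these degree conditions one side of any $2$-cut contains two $x$--$y$ paths whose lengths differ by two. Gluing these to an odd and an even $x$--$y$ path on a non-bipartite other side, or invoking Bondy--Vince on a bipartite other side, produces the two cycles. Theorem~\ref{admissible: k=3} has its own nontrivial inductive proof (contracting $N(x)$, analysing blocks of the contracted graph); nothing in your proposal substitutes for it, and without some such path-pair statement the $2$-cut case does not go through.

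The second gap is the $3$-connected case, which you correctly identify as the principal difficulty but do not prove. Your plan (longest cycle plus three attachment paths, then a parity case analysis) is not what the paper does and there is no evidence it terminates: the paper instead reduces to the non-bipartite case via Bondy--Vince, takes a \emph{shortest odd} cycle, and develops dedicated machinery (Lemma~\ref{non-seperated even cycle} producing an even cycle whose deletion leaves the graph connected and which has at most one chord, the notion of quasi-diagonal vertices, and Lemmas~\ref{two:cycle}--\ref{two odd cycles} handling pairs of disjoint or once-intersecting cycles of prescribed parities). Since you explicitly defer this step, the proposal as written establishes the theorem only modulo its two central ingredients.
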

	This can be viewed as the average degree counterpart of the theorem of Bondy-Vince \cite{BV}.
	We will present the proof of Theorem~\ref{main} in Subsection~\ref{Sec:reduction} by reducing it to two results,
	one of which states that any $3$-connected graph on at least $6$ vertices contains two cycles of consecutive even lengths.
	The proof is mainly based on structural analysis on minimal graphs satisfying the edge-density condition.
	
	Regarding to consecutive cycle lengths, a companion problem is to consider cycle lengths modulo a fixed positive integer $k$.
	This was proposed by Burr and Erd\H{o}s (see \cite{Erd76,Erd95}) in the following question.
	Let $k> \ell \geq 0$ be integers such that the congruence class $\ell \pmod k$ contains some even integers.
	Is it true that there exists a least constant $c_{\ell,k}$ such that every $n$-vertex graph with at least $c_{\ell,k}\cdot n$ edges contains a cycle of length $\ell$ modulo $k$?
	This was proved affirmatively by Bollob\'as \cite{B77},
	and there has been a substantial body of subsequent research studying this topic, see \cite{Th83,Th88,Dean,S92,DLS93,CS,V00,Fan02,D10,Ma16,SV17,LM18,LM19,GHLM,CY19,MS22}.
	Erd\H{o}s \cite{Erd95} asked for exact values of $c_{\ell,k}$,
	however to date the only known case is the folklore result $c_{0,2}=3/2$.
	Using Theorem~\ref{main}, one can get the following new case easily.
	
	\begin{cor}\label{cor}
		It holds that $c_{2,4}=5/2$.
	\end{cor}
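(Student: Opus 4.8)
The plan is to establish the two inequalities $c_{2,4}\le \frac52$ and $c_{2,4}\ge \frac52$ separately, both essentially as consequences of Theorem~\ref{main}. The single observation driving everything is that among any two \emph{consecutive} even integers $2m$ and $2m+2$, exactly one is congruent to $2 \pmod 4$: indeed $2m\equiv 2\pmod 4$ precisely when $m$ is odd, in which case $2m+2\equiv 0\pmod 4$, and conversely. Hence any graph containing two cycles of consecutive even lengths automatically contains a cycle of length $2\pmod 4$.

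For the upper bound $c_{2,4}\le \frac52$, I would take any $n$-vertex graph $G$ with $e(G)\ge \frac52 n$. Since $\frac52 n>\frac52(n-1)$, the hypothesis of Theorem~\ref{main} is met. Moreover, a graph all of whose blocks are copies of $K_5$ has exactly $\frac52(n-1)$ edges (each block contributes $10$ edges and $4$ new vertices, so $n=4b+1$ and $e=10b=\frac52(n-1)$), which is strictly less than $\frac52 n$; thus such a $G$ cannot be the exceptional graph. Theorem~\ref{main} then yields two cycles of consecutive even lengths in $G$, and by the observation above $G$ has a cycle of length $2\pmod 4$. This shows that the threshold $\frac52 n$ suffices.

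For the lower bound $c_{2,4}\ge \frac52$, I would exhibit the extremal family itself: for each $b$, let $G_b$ be the graph obtained by gluing $b$ copies of $K_5$ in a tree-like fashion at cut vertices, so that $n=4b+1$ and $e(G_b)=\frac52(n-1)$. Every cycle is $2$-connected and hence lies in a single block, so it has length in $\{3,4,5\}$, none of which is $2\pmod 4$; therefore $G_b$ has no cycle of length $2\pmod 4$. Since $e(G_b)/n=\frac52\cdot\frac{n-1}{n}\to \frac52$, for any constant $c<\frac52$ the graph $G_b$ has more than $c\,n$ edges once $b$ is large, yet contains no target cycle; hence no constant below $\frac52$ can serve as $c_{2,4}$.

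Combining the two bounds gives $c_{2,4}=\frac52$. I expect no genuine obstacle here: the corollary is a short deduction, with all the real difficulty already absorbed into Theorem~\ref{main}. The only points requiring care are the bookkeeping between the $n$ and $(n-1)$ normalizations of the edge count and the verification that the exceptional $K_5$-block graphs indeed avoid cycles of length $2\pmod 4$.
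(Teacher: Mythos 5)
Your proof is correct and is exactly the intended deduction: the paper itself gives no explicit argument for the corollary (it is stated as an easy consequence of Theorem~\ref{main}), and your two-step argument --- the parity observation that one of any two consecutive even lengths is $2\pmod 4$, plus the $K_5$-block trees for the lower bound --- is precisely what the authors have in mind. The bookkeeping between the $\frac52 n$ and $\frac52(n-1)$ normalizations is handled correctly.
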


	\subsection{Proof by reduction}\label{Sec:reduction}
	We now present the proof of Theorem~\ref{main} by reducing to the following two results.
	\begin{thm}\label{admissible: k=3}
		Let $G$ be a graph with $x, y\in V(G)$ such that $G+xy$ is 2-connected. If every vertex of $G$
		other than $x$ and $y$ has degree at least $3$, and any edge $uv\in E(G)$ with $\{u,v\}\cap \{x,y\} = \emptyset$ has degree sum $d_G(u)+d_G(v)\ge 7$,
		then there exist two paths from $x$ to $y$ in $G-xy$ whose lengths differ by two.
	\end{thm}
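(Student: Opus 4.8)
The plan is to set $H:=G-xy$ and to use the reformulation that two $x$–$y$ paths of $H$ whose lengths differ by two are the same thing as two cycles through the edge $xy$ of $G+xy$ whose lengths differ by two. Since $G+xy$ is $2$-connected, for $w\notin\{x,y\}$ the graphs $H-w$ and $(G+xy)-w$ differ only by the edge $xy$, so a cut vertex $w$ of $H$ must have $x,y$ in different components of $H-w$; moreover $x,y$ are themselves not cut vertices. Hence $H$ is a clean chain of blocks $B_1,\dots,B_k$ running from $x\in B_1$ to $y\in B_k$ and joined at cut vertices $v_1,\dots,v_{k-1}$, with no pendant blocks. I would then induct on $e(G)$ within the whole class of \emph{admissible} instances $(G,x,y)$ singled out by the hypotheses (the name in the statement suggests exactly this framework). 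If $k\ge 2$, peel off $B_1$: when $B_1$ is $2$-connected, apply the induction hypothesis to $(B_1,x,v_1)$ — the degree conditions transfer verbatim because every vertex internal to $B_1$ keeps its full $G$-degree and lies outside $\{x,y,v_1\}$ — to get two $x$–$v_1$ paths differing by two, and append a fixed $v_1$–$y$ path through $B_2\cup\dots\cup B_k$ (internally disjoint from $B_1$, so lengths still differ by two); when $B_1$ is the single edge $xv_1$, every $x$–$y$ path starts $x\,v_1$, so pass to the smaller admissible instance $(H-x,\,v_1,\,y)$. Either way the cut-vertex case closes, leaving $H$ itself $2$-connected.

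For $2$-connected $H$ that is not $3$-connected, take a $2$-cut $\{a,b\}$ with bridges $B_1,\dots,B_r$ ($r\ge2$) and route through the bridge carrying $x$ and $y$ while accounting for the other bridges through the set of $a$–$b$ path lengths they contribute, reducing again to a smaller admissible instance. The decisive place where the hypotheses are used is that the degree-sum condition $d_G(u)+d_G(v)\ge 7$ forbids a bridge consisting of a single edge joining two degree-$3$ vertices; this is precisely what excludes the genuine obstruction $K_4$, whose $x$–$y$ path lengths are only $2$ and $3$, and more generally it keeps every bridge rich enough that its spectrum of $a$–$b$ lengths forms a block of consecutive values rather than an isolated pair. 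Carrying this through by induction leaves a $3$-connected graph $H$ inheriting the same minimum-degree and degree-sum hypotheses.

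In the $3$-connected base case the plan is to produce the second path by a rerouting that shifts the length by \emph{exactly} two. Take a shortest $x$–$y$ path $P$ and examine the BFS layers from $x$: every internal vertex of $P$ has degree at least three, hence a neighbour off $P$, and $3$-connectivity forces these off-path neighbours to reconnect to $P$ (or to $y$) along internally disjoint routes. I would use the minimum-degree and degree-sum conditions to locate a local configuration realizing a $\pm2$ detour — replacing an edge $ww'$ of $P$ by an internally disjoint path $w\,z_1\,z_2\,w'$ for a net $+2$, or short-cutting a length-$3$ subpath by a chord for a net $-2$ — and thereby a second $x$–$y$ path differing from $|P|$ by two. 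Equivalently, assume no two $x$–$y$ path lengths differ by two; then the set $\mathcal L$ of achievable lengths contains no three consecutive integers and no two integers at distance two, forcing each parity class of $\mathcal L$ to be a very sparse set with gaps at least four, which is incompatible with the three internally disjoint $x$–$y$ paths and the many chords forced by $d_G(u)+d_G(v)\ge 7$.

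The hard part is this final engineering of a detour of length exactly two (not one, and not more) while keeping the result a genuine path: a generic reroute changes the length by an uncontrolled amount and may repeat a vertex, so the argument must exploit the hypotheses sharply — minimum degree three to \emph{start} a detour and the bound $7$ (banning adjacent degree-$3$ vertices, i.e. $K_4$ and its kin) to \emph{close} it two steps later with the correct parity. The companion difficulty is the $2$-cut reduction: one must verify that replacing auxiliary bridges by their length spectra yields an honest admissible instance and that the degree conditions are inherited by the $3$-connected core, together with ruling out, by a finite check, any small sporadic $3$-connected graph that might behave like $K_4$. These two points are where I expect the bulk of the technical work to lie.
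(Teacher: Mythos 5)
Your opening reduction is sound: since $G+xy$ is $2$-connected, the blocks of $H=G-xy$ do form a chain from $x$ to $y$, and peeling off end-blocks by induction is legitimate (the degree and degree-sum hypotheses do pass to the interior of a $2$-connected end-block). But from that point on the proposal stops being a proof and becomes a description of where a proof would have to live. The two remaining steps --- the $2$-cut reduction and the $3$-connected base case --- are exactly where the entire content of the theorem sits, and neither is carried out. For the $2$-cut step, ``replacing auxiliary bridges by their length spectra'' is not a graph operation, so the induction hypothesis cannot be applied to whatever object results; if you instead replace a bridge by a concrete gadget (a path, say), the substituted vertices violate the minimum-degree and degree-sum hypotheses, and you would also need to handle the cases where $x$ or $y$ lies in the $2$-cut or where $x,y$ fall into different bridges. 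For the $3$-connected case, ``locate a local configuration realizing a $\pm2$ detour'' is precisely the hard combinatorial core of Bondy--Vince-type theorems, not a routine consequence of $\delta\ge 3$: a generic reroute changes the length by an uncontrolled amount, as you yourself note. The fallback argument via the set $\mathcal L$ of achievable lengths does not close either: the assumption that no two lengths differ by two only forces each parity class of $\mathcal L$ to have gaps at least $4$, and nothing you state derives a contradiction from that together with three internally disjoint $x$--$y$ paths. Since you explicitly defer ``the bulk of the technical work'' to these two points, the proposal has a genuine gap at its core.

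It is worth noting that the paper's proof avoids ever isolating a $3$-connected core. It inducts on $|V(G)|$ and splits on whether $G-y$ contains a $4$-cycle through $x$. If it does, one reroutes around that $C_4$ to convert any single $x$--$y$ path into two paths differing by two (after separating off the component of $G-V(C)$ containing $y$ and applying induction there). If it does not, one contracts $X=N_G(x)$ into a single vertex $x^*$: the absence of a $C_4$ through $x$ is exactly what guarantees that this contraction preserves the degree of every vertex outside $\{x^*,y\}$, so the induction hypothesis applies to the block of the contracted graph containing $y$ (with a further contraction trick when $N_G(y)=N_G(x)$). The degree-sum condition $d_G(u)+d_G(v)\ge 7$ is used there not to engineer local detours along a path but to rule out the degenerate configuration $V(G)=X\cup\{x,y\}$ and to make the hypotheses hereditary under these contractions. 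In short, your identification of $K_4$ as the obstruction excluded by the bound $7$ is correct, but the mechanism that actually exploits this in the paper is a contraction-based induction, not a $3$-connected rerouting argument, and the latter is not supplied in your proposal.
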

	
	\begin{thm}\label{3-connected}
		Every 3-connected graph of order at least 6 contains two cycles of consecutive even lengths.
	\end{thm}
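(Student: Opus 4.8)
The plan is to separate the argument according to whether $G$ is bipartite, and to handle the main (non-bipartite) case by reducing to the admissible-paths result, Theorem~\ref{admissible: k=3}.

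\emph{The bipartite case} follows immediately from the Bondy--Vince theorem (Theorem~\ref{BV:Theorem}). Since $G$ is $3$-connected on at least six vertices, it has minimum degree at least three and is neither $K_1$ nor $K_2$, so it contains two cycles whose lengths differ by one or two. In a bipartite graph every cycle is even, so two cycle lengths cannot differ by one; hence they differ by two and are both even, giving two cycles of consecutive even lengths. We may therefore assume from now on that $G$ is non-bipartite, and in particular contains an odd cycle.

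\emph{Reduction to the admissible lemma.} For the non-bipartite case I would fix an edge $xy\in E(G)$ and set $H:=G-xy$. Since $G$ is $3$-connected, $H+xy=G$ is $2$-connected and every vertex other than $x,y$ retains degree at least three in $H$. If, in addition, every edge $uv$ of $H$ with $\{u,v\}\cap\{x,y\}=\emptyset$ satisfies $d_H(u)+d_H(v)\ge 7$, then Theorem~\ref{admissible: k=3} supplies two $x$--$y$ paths $P_1,P_2$ in $H$ with $|E(P_2)|=|E(P_1)|+2$. Adding the edge $xy$ closes each path into a cycle, yielding two cycles of lengths $|E(P_1)|+1$ and $|E(P_1)|+3$, which differ by exactly two. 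These cycles are even precisely when $P_1,P_2$ have odd length, so the whole argument hinges on arranging the degree-sum hypothesis and on controlling the parity of the paths. The degree-sum condition can only fail on an edge both of whose endpoints have degree three in $G$; hence if the set of degree-three vertices is independent, the hypothesis holds for every admissible edge and every choice of adjacent $x,y$. When two degree-three vertices are adjacent --- the essentially cubic regime --- I would instead choose $xy$ so that every remaining edge between two degree-three vertices is incident with $\{x,y\}$, or pass to a minor (suppressing or contracting a degree-three configuration) that preserves $3$-connectivity and the relevant cycle lengths and recurse, treating the residual nearly cubic graphs by a direct structural analysis.

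\emph{Parity, and the main obstacle.} Theorem~\ref{admissible: k=3} produces two paths of a common but \emph{uncontrolled} parity, whereas we need odd paths to make the two closing cycles even. To force this I would use the odd cycle guaranteed by non-bipartiteness: since $H$ is $2$-connected and non-bipartite there are $x$--$y$ paths of both parities, and an odd-ear exchange lets one shift the parity of a path while altering its length only by an odd amount, so that a pair differing by two can be made to have odd lengths. Alternatively, using $3$-connectivity one can attach a third internally disjoint $x$--$y$ path to $P_1,P_2$ to form a theta subgraph, whose three cycle lengths are the pairwise sums of the path lengths; once the third path is routed with parity matching that of $P_1,P_2$, two of these pairwise sums are consecutive even numbers. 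I expect this parity coordination to be the main technical obstacle: one must simultaneously keep the length difference equal to two, keep the paths internally disjoint enough to close into genuine cycles, and force even length using only an odd-cycle "switch". The hardest instances should be the near-cubic non-bipartite $3$-connected graphs, where there is little room to reroute while preserving all three constraints at once.
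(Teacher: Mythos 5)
Your bipartite case is correct and coincides with the paper's: Bondy--Vince gives two cycles whose lengths differ by one or two, and in a bipartite graph the difference must be two with both lengths even. The non-bipartite case, however, has two genuine gaps, and the paper's proof goes a completely different way there (it never invokes Theorem~\ref{admissible: k=3} inside the proof of Theorem~\ref{3-connected}; instead it builds a machinery of disjoint cycles of prescribed parities, ``quasi-diagonal'' vertices on an even cycle, and a careful choice of an even cycle $C$ whose removal leaves $G$ connected).

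The first gap is parity. Theorem~\ref{admissible: k=3} produces two $x$--$y$ paths whose lengths differ by two but gives no control whatsoever over their common parity, and closing them with the edge $xy$ gives even cycles only when the paths are odd. You identify this as the main obstacle but do not resolve it. The ``odd-ear exchange'' would have to shift the parity of \emph{both} paths simultaneously while keeping their length difference exactly two, and no mechanism for that is given. The theta-graph alternative fails at an earlier point: the two paths supplied by Theorem~\ref{admissible: k=3} are not claimed to be internally disjoint from each other, so together with a third path they need not form a theta subgraph, and sums like $\ell(P_1)+\ell(P_3)$ need not be cycle lengths at all. The second gap is the degree-sum hypothesis. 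In a $3$-connected cubic graph every edge has degree sum $6<7$, so Theorem~\ref{admissible: k=3} is simply unavailable for any choice of $xy$; your fallback (``choose $xy$ so that every remaining edge between two degree-three vertices is incident with $\{x,y\}$, or pass to a minor \ldots and recurse'') cannot be carried out for cubic graphs and is not an argument. This is exactly the regime where the real work lies: the paper's Lemma~\ref{two odd cycles}, for instance, ends by analyzing a cubic graph consisting of two induced odd cycles joined by a perfect matching, and the final case analysis of the theorem handles a shortest odd cycle $D$ with $G-V(D)$ a forest by hand. As it stands, your proposal proves the bipartite case and reduces a fragment of the non-bipartite case to two unproved claims, so it does not constitute a proof.
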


	\begin{proof}[\bf Proof of Theorem~\ref{main}]
		Let $G$ be a minimum counterexample of Theorem~\ref{main}.
		That is, $G$ is a graph of order $n$ with $e(G)\ge \frac{5(n-1)}{2}$ which does not satisfy Theorem~\ref{main}, but any graph $H$ with $|V(H)|<n$ and $e(H)\ge \frac{5(|V(H)|-1)}{2}$ holds for Theorem~\ref{main}.
		It is easy to see that $G$ is connected with minimum degree $\delta(G)\ge 3$.
		One can also show that any edge $uv\in E(G)$ satisfies $d_G(u)+d_G(v)\ge 7$.
		To see this, say there exists an edge $uv$ with $d_G(u)+d_G(v)\leq 6$. Then the subgraph $G-\{u,v\}$ has $n-2$ vertices and at least $5(n-3)/2$ edges;
		by the minimality choice of $G$, either $G-\{u,v\}$ (and thus $G$) contains two cycles of consecutive even lengths, or $4|(n-3)$ and every block of $G-\{u,v\}$ is a clique $K_5$.
		In the latter case, we can infer that $d_G(u)+d_G(v)=6$ and $G$ is connected with every block being $K_5$, so when we put the edge $uv$ back,
		one can easily find two cycles of consecutive even lengths in $G$, a contradiction.
		
		We now claim that $G$ is 2-connected. Suppose not. Then there exists a cut-vertex $v$ of $G$.
		Let $B$ be a component of $G-v$. Let $G_2 = G- B$ and $G_1 = G-V(G_2-v)$.
		We must have $e(G_1)\le \frac{5(|V(G_1)|-1)}{2}$ and $e(G_2)\le \frac{5(|V(G_2)|-1)}{2}$,
		with equality if and only if every block of $G_1$ and $G_2$ is a clique $K_5$ (otherwise, by the minimum assumption on $G$, some $G_i$ and thus $G$ contains two cycles of consecutive  even lengths, a contradiction).
		Then $\frac{5(n-1)}{2}\le e(G) =e(G_1) + e(G_2) \le \frac{5(n-1)}{2}$, so the equality holds, which easily implies that every block of $G$ is a clique $K_5$, a contradiction.
		
		By Theorem~\ref{3-connected}, we have $G$ is not 3-connected.
		So there exists a $2$-cut $\{x,y\}$ such that $G-\{x,y\}$ has at least two components, say $F_1,F_2$.
		Let $H_i$ be the graph induced by $V(F_i)\cup \{x,y\}$ for $i\in \{1,2\}$.
		Since $G$ is 2-connected, each $H_i+xy$ is 2-connected and satisfies the conditions of Theorem~\ref{admissible: k=3}.
		By Theorem~\ref{admissible: k=3}, there exist two paths from $x$ to $y$ in $H_1-xy$ whose lengths differ by two.
		If $H_2-xy$ is not bipartite, then there exist an odd path and an even path between $x$ and $y$ in $H_2-xy$.
		So $G$ contains two cycles of consecutive even lengths, a contradiction.
		Now we assume $H_2-xy$ is bipartite. Then by Theorem~\ref{BV:Theorem}, $H_2-xy$ (and thus $G$) contains two cycles of consecutive even lengths, finishing the proof.
	\end{proof}

	\subsection{Notation and organization}
	
	All graphs in this paper are finite, undirected, and simple.
	Let $G$ be a graph and $H$ be its subgraph. 
	Define $N_G(H):=\left(\bigcup_{v\in V(H)}N_G(v)\right)\setminus V(H)$ and $N_G[H]:=N_G(H)\cup V(H)$.
	Let $S$ be a subset of $V(G)$.
	We define $G[S]$ to be the subgraph induced by $S$ in $G$, and $G-S$ to be the subgraph $G[V(G)\setminus S]$.
	We say that a graph $G'$ is obtained from $G$ by {\it contracting $S$} into a vertex $s$,
	if $V(G')=(V(G)\setminus S)\cup\{s\}$ and $E(G')=E(G-S)\cup\{vs: v\in V(G)\setminus S$ is adjacent to some vertex of $S$ in $G\}$.
	For two distinct vertices $x,y$ of $G$, let $G+xy$ denote the graph obtained from $G$ by adding a new edge $xy$,
	and let $G-xy$ denote the graph obtained from $G$ by removing the edge $xy$.
	A vertex is a {\it leaf} of $G$ if it has degree one in $G$.
	We say a vertex subset $A$ of $G$ is a {\em cut} of $G$ if $G-A$ contains more components than $G$.
	A vertex is called a {\it cut-vertex} of $G$ if it forms a cut of $G$.
	A {\em block} $B$ of $G$ is a maximal connected subgraph of $G$ such that $B$ contains no cut-vertex.
	So, a block is either an isolated vertex, an edge or a $2$-connected graph.
	A block in $G$ is an {\em end-block} if and only it containing at most one cut-vertex of $G$.
	
	A {\em theta-graph} is a graph consisting of three internally vertex-disjoint paths between two fixed vertices.
	For a cycle $C$, we use $\overrightarrow{C}$ to express this cycle with a prescribed orientation,
	and for vertices $u,v\in V(C)$, the notation $\overrightarrow{C}[u,v]$ denotes the subpath of $C$ from $u$ to $v$ following the given orientation.
	If $P$ is a walk, a path or a cycle, then we use $\ell(P)$ to denote the {\it length} of $P$, i.e., the number of edges of $P$.
	Two edges are called {\it independent} if they share no common vertex.
	Throughout the rest of the paper, we will reserve the word {\it disjoint} for vertex-disjoint.
	
	The rest of the paper is organized as follows.
	In Section~\ref{Graph with two cut}, we prove Theorem~\ref{admissible: k=3}.
	In Section~\ref{sec:3-connected}, we prove Theorem~\ref{3-connected}.
	In Section~\ref{sec:conclusion}, we present a generalization of Theorem~\ref{admissible: k=3} and discuss several related open problems.
	
	\section{Proof of Theorem \ref{admissible: k=3}}\label{Graph with two cut}
	Let $G$ be a graph with two vertices $x, y\in V(G)$ such that $G+xy$ is 2-connected,
	every vertex of $G$ other than $x$ and $y$ has degree at least $3$, and any edge $uv\in E(G)$ with $\{u,v\}\cap \{x,y\} = \emptyset$ has degree sum $d_G(u)+d_G(v)\ge 7$.
	We aim to show that there exist two paths from $x$ to $y$ in $G-xy$ whose lengths differ by two.
	Without loss of generality, we may assume that $d_G(y) \ge d_G(x)$ and $xy\notin E(G)$.
	
	We prove by induction on $|V(G)|$. By the condition we have $|V(G)|\ge 5$.
	It is easy to check that Theorem \ref{admissible: k=3} holds for the base case $|V(G)|=5$.
	Now let us assume that Theorem \ref{admissible: k=3} holds for any graph $H$ with $|V(H)|<|V(G)|$.
	We will divide the rest of the proof into two cases.
	
	\medskip
	
	\textbf{Case 1.} There exists a cycle of length four containing $x$ in $G-y$.
	
	\medskip
	
	Let $C = xx_1ax_2x$ be a cycle of length four in $G-y$.
	Let $F$ be the component of $G-V(C)$ containing $y$.
	If $F$ is adjacent to $x_1$, let $P$ be a path from $x_1$ to $y$ in $G$.
	Then $xx_1\cup P$ and $xx_2ax_1\cup P$ are two paths from $x$ to $y$ whose lengths differ by two.
	So $F$ is not adjacent to $x_1$. Similarly we know that $F$ is not adjacent to $x_2$.
	Let $G' = G-V(F)$.
	Then $G'$ together with vertices $x,a$ satisfy the induction hypothesis.
	So there exist two paths from $x$ to $a$ in $G'$ whose lengths differ by two.
	Since $G+xy$ is 2-connected (note that $F$ is not adjacent to $x_1$ and $x_2$),
	$F$ must be adjacent to $a$. Let $P'$ be a path form $a$ to $y$ with all internal vertices in $F$.
	Concatenating those paths, we can get two paths from $x$ to $y$ in $G$ whose lengths differ by two, as desired.

	\medskip
	
	\textbf{Case 2.}  There does not exist a cycle of length four containing $x$ in $G-y$.
	
	\medskip
	
	Let $X=N_G(x)$. Note that $y\notin X$. Let $G^*$ be the graph obtained from $G-x$ by contracting $X$ into a new vertex $x^*$.
	Since $G-y$ does not have a cycle of length four containing $x$, we know that any vertex $v$ other than $x^*$ and $y$ has degree $d_{G^*}(v) = d_G(v)$.
	If $G^*+x^*y$ is not 2-connected, then $x^*$ is the only cut-vertex of $G^*$ and we let $B$ be the block of $G^*$ containing $y$.
	If $G^*+x^*y$  is 2-connected, we let $B=G^*+x^*y$.

	If $|V(B)|\ge 3$, then $B$ is 2-connected.
	In this case, $B$ together with vertices $x^*,y$ satisfy the induction hypothesis.
	So there exist two paths $P_1$ and $P_2$ from $x^*$ to $y$ in $B$ whose lengths differ by two.
	Putting back to $G$, we see that there exist $x_1,x_2\in X$ (which may be the same) and paths $P'_i$ from $x_i$ to $y$ for $i\in \{1,2\}$ in $G$ with $\ell(P'_2) =\ell(P'_1)+2$.
	Then $xx_1\cup P'_1$ and $xx_2\cup P'_2$ are two paths from $x$ to $y$ in $G$ of lengths differing by two.
	
	It remains to consider the case when $|V(B)|=2$, i.e., $B = x^*y$.
	This gives that $N_G(y)\subseteq X$.
	Since $d_G(y)\ge d_G(x)$ and $xy\notin E(G)$, we have $N_G(y)=X=N_G(x)$.
	Since $G-y$ does not have a cycle of length four containing $x$, any vertex in $X$ have at most one neighbor in $X$.
	Since every vertex of $G$ other than $x$ and $y$ has degree at least $3$ and any edge $uv\in E(G)$ with $\{u,v\}\cap \{x,y\}= \emptyset$ has $d_G(u)+d_G(v)\ge 7$,
	we can see that $G\setminus (X\cup\{x,y\}) \ne \emptyset$, which implies $G^*\ne B$.
	
	Hence, there exists a block $D'$ of $G^*$ other than $B$.
	Let $D$ be obtained from $D'$ by removing $x^*$.
	Since $G +xy$ is 2-connected, we have $|X|\ge 2$ and $|N_G(D)\cap X|\ge 2$.
	Let $u_1$ be a vertex in $N_G(D)\cap X$ and $G_1$ be the graph obtained from $G[X\cup D]$ by contracting $X\setminus \{u_1\}$ into a new vertex $u_2$.
	For any vertex $v\in D$, we have $d_{G_1}(v)=d_G(v)$.
	Since $D'$ is 2-connected, we see $G_1+u_1u_2$ is 2-connected.
	Then $G_1+u_1u_2$ together with vertices $u_1, u_2$ satisfy the induction hypothesis.
	So there exist two paths $P_1$ and $P_2$ from $u_1$ to $u_2$ in $G_1$ of lengths differing by two.
	Back to $G$, we see there exist vertices $x_1, x_2\in X\setminus\{u_1\}$ (which may be the same) and paths $P'_i$ from $x_i$ to $u_1$ for $i\in \{1,2\}$ in $G$ with $\ell(P'_2) =\ell(P'_1)+2$.
	Then $xx_1\cup P'_1\cup u_1y$ and $xx_2\cup P'_2\cup u_1y$ are two paths from $x$ to $y$ in $G$ of lengths differing by two.
	This completes the proof of Theorem \ref{admissible: k=3}. \qed

	\section{Proof of Theorem~\ref{3-connected}}\label{sec:3-connected}
	
	This section will be devoted to the proof of Theorem~\ref{3-connected}, stating that every 3-connected graph $G$ with at least 6 vertices contains two cycles of consecutive even lengths.
	We will start with a series of lemmas.

	\begin{lem}\label{non-seperated even cycle}
		Let $G$ be a 3-connected graph and let $D$ be a fixed connected subgraph of $G$ such that $G-V(D)$ contains an even cycle.
		If $C$ is an even cycle of $G-V(D)$ such that the component of $G-V(C)$ containing $D$ is maximal,
		then $G-V(C)$ is connected. Moreover, such $C$ has at most one chord, and if $u_0v_0$ is a chord of $C$,
		then both paths $\overrightarrow{C}[u_0,v_0]$ and $\overrightarrow{C}[v_0,u_0]$ have even lengths.
	\end{lem}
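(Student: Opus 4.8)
Throughout I work under the running hypothesis of this section, that $G$ contains no two cycles of consecutive even lengths; this is the setting in which the lemma is applied, and it is genuinely needed for the last assertion, since for $G=K_6$, $D=\{v\}$ and $C$ a maximal (hence $4$-vertex) even cycle, $C$ has two chords. I will prove the three assertions in the order: connectivity of $G-V(C)$; the parity of the two arcs of a chord; and the uniqueness of a chord. The last two are each reduced to the first through the maximality of $C$.

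\textbf{Connectivity.} Suppose $G-V(C)$ is disconnected; let $F$ be the component containing $D$ and $F'$ another component. Since $F'$ is a component of $G-V(C)$ we have $N_G(F')\subseteq V(C)$, and $3$-connectivity gives $|N_G(F')|\ge 3$ and $|N_G(F)|\ge 3$. Choose attachment vertices $a_1,a_2,a_3\in N_G(F')$; as $F'$ is connected these are joined inside $F'\cup\{a_1,a_2,a_3\}$ by a spider (or a path), producing three cycles, each consisting of two routes through $F'$ together with one arc of $C$ between the corresponding $a_i$. The three arc-lengths sum to $\ell(C)$, which is even, so the three cycle-lengths sum to an even number and at least one of the cycles, say $C'$, is even. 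Because $\ell(C)$ is even, the companion cycle obtained from $C'$ by replacing its short arc with the complementary long arc of $C$ between the same two endpoints is also even. Using $C'$ frees the long-arc part of $C$, while using its companion frees the short-arc interior; were neither freed part to meet $N_G(F)$, then $N_G(F)$ would lie in the two endpoints of that arc, contradicting $|N_G(F)|\ge 3$. Hence one of the two even cycles frees a vertex of $N_G(F)$, which then joins $F$ in the complement; since that cycle is an even cycle of $G-V(D)$, its $D$-component is strictly larger than $F$, contradicting the maximality of $C$.

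\textbf{Parity of the arcs of a chord.} Let $u_0v_0$ be a chord; its two arcs have lengths summing to the even number $\ell(C)$, so they share a parity, and I suppose both are odd. Then $\gamma:=\overrightarrow{C}[u_0,v_0]+u_0v_0$ is an even cycle with $V(\gamma)\subsetneq V(C)$, the missing vertices forming the nonempty interior of the other arc. As $V(\gamma)\subseteq V(C)$ is disjoint from $D$, $\gamma$ is an even cycle of $G-V(D)$, and the $D$-component of $G-V(\gamma)$ contains $F$; by maximality of $C$ it cannot be larger than $F$, so it equals $F$. But then the nonempty freed interior lies in other components, so $G-V(\gamma)$ is disconnected, contradicting the connectivity assertion applied to the equally maximal cycle $\gamma$. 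Therefore both arcs are even.

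\textbf{At most one chord.} Suppose $C$ has two chords; by the previous step each joins two vertices at even distance along $C$. The plan is to reroute $C$ through both chords to obtain an even cycle $C'$ with $V(C')\subsetneq V(C)$, after which the argument of the previous step (maximality together with connectivity) yields a contradiction. A short case analysis according to whether the chords cross, are nested, or share an endpoint shows that such a proper even subcycle always exists, \emph{except} in degenerate configurations where all arcs between consecutive chord endpoints have length one, the minimal instance being a $C_4$ with both diagonals, i.e.\ a $K_4$. This degenerate case is exactly where the maximality argument is vacuous and is the main obstacle: there I instead descend into the ambient graph, using that $F$ is connected (by the connectivity assertion) and attaches to $C$ in at least three vertices (by $3$-connectivity), so that a path through $F$ between two attachment vertices can be closed up inside the $K_4$-structure by sub-paths of several consecutive lengths, forcing two cycles of consecutive even lengths and contradicting the running hypothesis. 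Carrying out this final extraction cleanly, and checking that the non-degenerate cases really do yield a proper even subcycle for every chord configuration, is the most delicate part of the proof.
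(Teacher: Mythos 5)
Your connectivity argument and your parity-of-chord-arcs argument are both correct and essentially the paper's: the paper likewise takes a fan of three paths from a second component $H$ to attachment vertices $v_0,v_1,v_2$ on $C$, notes that the three cycles $\overrightarrow{C}[v_i,v_{i+1}]\cup P_i\cup P_{i+1}$ have even total length so one of them is even, and contradicts the maximality of $F$; your ``companion cycle'' device merely merges the paper's two sub-cases (whether or not $F$ has a neighbour off $\{v_0,v_1,v_2\}$) into one, which is a mild streamlining. The reduction of both chord statements to ``$C$ admits no even cycle on a proper subset of $V(C)$,'' via maximality combined with the already-proved connectivity, is also exactly the paper's route. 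You moreover correctly observed that the lemma as literally stated fails when $C$ is a $4$-cycle carrying both diagonals (e.g.\ $G=K_6$ with $D$ a single vertex): the paper's own step ``no proper even subcycle, hence at most one chord'' is valid only for $\ell(C)\ge 6$, and in the paper's application to the disjoint odd/even-cycle lemma the case $C=C_4$ is disposed of separately before the chord structure is invoked. A cleaner repair than your running hypothesis would therefore be to add $\ell(C)\ge 6$ to the statement, under which your two-chord case analysis is complete and correct.

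The genuine gap is your treatment of the one exceptional configuration. For the $C_4$-with-both-diagonals case you propose to take a path $P$ through $F$ between two attachment vertices of the $K_4$ and close it up ``by sub-paths of several consecutive lengths.'' Between two distinct vertices of $K_4$ the available path lengths are exactly $1$, $2$ and $3$, so this yields cycles of lengths $\ell(P)+1$, $\ell(P)+2$, $\ell(P)+3$, two of which are even only when $\ell(P)$ is odd. If every path through $F$ between attachment vertices has even length --- which happens, for instance, whenever the three legs of the connecting spider all have the same parity --- you obtain a single even length $\ell(P)+2$ with no companion, and nothing in your sketch rules this situation out or manufactures a second even cycle of adjacent length. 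You concede that ``carrying out this final extraction cleanly'' is delicate, but as written this case is simply open, and it is precisely the case in which the stated conclusion can fail, so it cannot be waved through. Either supply the missing argument for the all-even-attachment-paths situation, or restate the lemma with the hypothesis $\ell(C)\ge 6$.
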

	
	\begin{proof}
		Let $C$ be an even cycle of $G-V(D)$ such that the component (say $F$) of $G-V(C)$ containing $D$ is maximal.
		We first show that $G-V(C)$ is connected.
		Suppose not. Then there is another component $H$ of $G-V(C)$.
		Since $G$ is 3-connected, there exists at least three neighbors of $H$ on $C$, say $v_0,v_1,v_2$,
		such that there are three internal disjoint paths $P_0,P_1,P_2$ from a vertex $u\in V(H)$ to $v_0,v_1,v_2$,
		respectively, with all internal vertices in $H$.
		Let $\overrightarrow{C}$ denote the orientation of $C$ such that $v_0,v_1,v_2$ appear in $C$ in this cyclic order.
		Suppose that $F$ has a neighbor in $V(C)\backslash\{v_0,v_1,v_2\}$, say $v\in V(\overrightarrow{C}[v_2,v_0])\setminus \{v_2,v_0\}$.
		Note that the theta-graph $P_0\cup P_1\cup P_2\cup \overrightarrow{C}[v_0,v_2]$ contains an even cycle $C'$.
		The component of $G-V(C')$ containing $D$ clearly contains $F\cup \{v\}$, contradicting the property of $C$.
		Since $G$ is 3-connected, we can conclude that $N_C(F)=\{v_0,v_1,v_2\}=N_C(H)$.
		For $i\in \{0,1,2\}$, let $C_i=\overrightarrow{C}[v_i,v_{i+1}]\cup P_i\cup P_{i+1}$, where the subscript is modulo $3$.
		Then we have $$\sum_{0\leq i\leq 2}\ell(C_i)=\ell(C)+2\sum_{0\leq i\leq 2}\ell(P_i)$$ is even.
		It follows that at least one of the three cycles (say $C_i$) is even.
		Then the component of $G-V(C_i)$ containing $F$ also contains the vertex $v_{i+2}$, again contradicting the property of $C$.
		This finishes the proof that $G-V(C)$ is connected.
		
		If there is an even cycle $C'$ with $V(C')\subsetneq V(C)$,
		then the component $F'$ of $G-V(C')$ containing $D$ also contains $F$.
		As $F$ is maximal (subject to the choice of $C$),
		we conclude that $F'=F$ and thus $C'$ is also an even cycle $G-V(D)$ such that the component of $G-V(C')$ containing $D$ is maximal.
		By the above proof, $G-V(C')$ is connected, implying that $|V(F')|>|V(F)|$, a contradiction.
		So there exists no even cycle $C'$ with $V(C')\subsetneq V(C)$.
		It follows that $C$ has at most one chord, and if $C$ has a chord $u_0v_0$,
		then both paths $\overrightarrow{C}[u_0,v_0]$ and $\overrightarrow{C}[v_0,u_0]$ have even lengths.
	\end{proof}
	
	For an even cycle $C$, two vertices $u,v\in V(C)$ are \emph{quasi-diagonal} in $C$ if $\ell(\overrightarrow{C}[u,v])=\ell(C)/2-1$ or $\ell(C)/2+1$.
	Note that every vertex in $C$ has exactly two quasi-diagonal vertices.
	
	\begin{lem}\label{two:cycle}
		Let $G$ be a graph. Let $B$ be an even cycle and $D$ be an odd cycle in $G$.
		If one of the following two situations is met:
		\begin{itemize}
			\item [(1).] $B$ and $D$ are disjoint and there exist two disjoint paths $P_1, P_2$ between $B$ and $D$ such that their endpoints in $B$ are quasi-diagonal, or
			\item [(2).] $B$ and $D$ have exactly one common vertex $u$ and there exists a path between $B-u$ and $D-u$ such that its origin in $B$ are quasi-diagonal with $u$,
		\end{itemize}
		then there are two cycles of consecutive even lengths in $G$.
	\end{lem}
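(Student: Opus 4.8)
The plan is to exploit the parity mismatch between the two arcs into which an even cycle is split at a pair of quasi-diagonal vertices and the two arcs into which an odd cycle is split at any pair of its vertices. Write $\ell(B)=2m$. In case (1), let $P_1$ run from $a_1\in V(B)$ to $b_1\in V(D)$ and $P_2$ from $a_2\in V(B)$ to $b_2\in V(D)$, with all interior vertices off $B\cup D$, where $a_1,a_2$ are quasi-diagonal. Because $a_1,a_2$ are quasi-diagonal, the two arcs of $B$ between them have lengths $m-1$ and $m+1$; call them $A_1,A_2$. Since $P_1,P_2$ are disjoint we have $b_1\neq b_2$, so the two arcs of $D$ between $b_1$ and $b_2$ have lengths summing to the odd number $\ell(D)$, and hence exactly one of them is even and the other odd.

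For each choice of a $B$-arc $A_i$ and a $D$-arc $A'$, the concatenation of $A_i$, $P_2$, $A'$, $P_1$ is a closed walk; since $B$ and $D$ are disjoint and $P_1,P_2$ are disjoint with interiors off $B\cup D$, this closed walk is in fact a cycle $C_i$, of length
\[
\ell(C_i)=\ell(A_i)+\ell(P_1)+\ell(P_2)+\ell(A').
\]
First I would fix the $D$-arc $A'$ and observe that replacing $A_1$ by $A_2$ changes the length by exactly $(m+1)-(m-1)=2$ while preserving parity. Then I would choose $A'$ to be whichever of the two $D$-arcs makes $\ell(A_1)+\ell(P_1)+\ell(P_2)+\ell(A')$ even; this is possible precisely because the two $D$-arcs have opposite parities. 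With this choice, $C_1$ and $C_2$ have even lengths differing by $2$, giving two cycles of consecutive even lengths.

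For case (2) I would argue identically, building cycles through the common vertex $u$. Let $Q$ be the given path, with endpoint $a$ in $B-u$ (quasi-diagonal with $u$) and endpoint $b$ in $D-u$. The two arcs of $B$ from $u$ to $a$ have lengths $m-1$ and $m+1$; joining each of them to $Q$ and closing up along one of the two arcs of $D$ from $b$ back to $u$ again produces a cycle (here the disjointness $V(B)\cap V(D)=\{u\}$ guarantees the two arcs meet only at $u$). Fixing the $D$-arc so that the overall length is even and then switching between the two $B$-arcs yields two even cycles whose lengths differ by $2$.

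The argument is short, so the only point needing care is verifying that each concatenation is a genuine cycle rather than a non-simple closed walk; this is exactly where the disjointness hypotheses on $B,D$ and on the connecting path(s) are used, and it is the one place where one must confirm that the small/degenerate configurations (for instance $\ell(B)=4$, so $m=2$ and the shorter $B$-arc has length $1$) do not force two of the relevant endpoints to coincide.
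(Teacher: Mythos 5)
Your proof is correct and follows essentially the same route as the paper's: fix the $D$-arc whose parity makes the resulting cycle even, then swap between the two quasi-diagonal $B$-arcs of lengths $\ell(B)/2-1$ and $\ell(B)/2+1$ to shift the length by exactly $2$. The degenerate configurations you flag cause no trouble, since the arcs and connecting paths meet only at their designated endpoints by hypothesis.
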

	
	\begin{proof}
		First let us show the proof for (1). For $i\in \{1,2\}$, let $u_i$ be the endpoint of $P_i$ in $B$ and $v_i$ be its endpoint in $D$.
		As $D$ is odd, the two paths $Q_1:=P_1\cup P_2\cup\overrightarrow{D}[v_1,v_2]$ and $Q_2:=P_1\cup P_2\cup\overrightarrow{D}[v_2,v_1]$ have lengths with different parity.
		Then one of these paths, say $Q_j$, satisfies $\ell(Q_j)\equiv \ell(B)/2-1(\bmod ~2)$.
		Since $u_1, u_2$ are quasi-diagonal in $B$, the two cycles $Q_j\cup\overrightarrow{B}[u_1,u_2]$ and $Q_j\cup\overrightarrow{B}[u_2,u_1]$ are two cycles of consecutive even lengths.
		The proof for (2) is identical to (1).
	\end{proof}
	
	In the following three lemmas, we show that two cycles of consecutive even lengths can be constructed in 3-connected graphs
	using two disjoint (or almost disjoint) cycles of prescribed parities.
	
	\begin{lem}\label{ two vertex-disjoint cycles: odd even}
		Let $G$ be a 3-connected graph. If there are two disjoint cycles one of which is even and the other is odd, then $G$ contains two cycles of consecutive even lengths.
	\end{lem}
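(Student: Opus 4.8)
The plan is to reduce the statement to Lemma~\ref{two:cycle}(1): writing $B$ for the given even cycle and $D$ for the given odd cycle, it suffices to produce two disjoint paths between $B$ and $D$ whose endpoints on $B$ are quasi-diagonal. Indeed, once such paths are found, the fact that $D$ is odd lets us choose the arc of $D$ joining their endpoints so as to fix the parity, while quasi-diagonality on $B$ forces the two resulting cycles to have lengths differing by exactly two; this is precisely the mechanism in the proof of Lemma~\ref{two:cycle}. Thus the whole difficulty is \emph{positional}: we must land the two paths on a pair of vertices of $B$ at arc-distance $\ell(B)/2\pm1$.

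First I would clean up the configuration. Applying Lemma~\ref{non-seperated even cycle} with $D$ as the fixed connected subgraph (note $B\subseteq G-V(D)$), I may assume that $B$ is chosen so that $F:=G-V(B)$ is connected and contains $D$, and that $B$ has at most one chord, which in that case splits $B$ into two even arcs. Since $G$ is $3$-connected and $F\neq\emptyset$, at least three vertices of $B$ have a neighbour in $F$; list them as $u_1,\dots,u_k$ with $k\ge 3$ in their cyclic order along $B$. Because $F$ is connected and contains $D$, each $u_i$ is joined to $D$ by a path internally contained in $F$, and the fan form of Menger's theorem together with $3$-connectivity lets me realise two such paths disjointly from a prescribed pair $u_i,u_j$, except when a single vertex of $F$ separates $\{u_i,u_j\}$ from $D$ --- a situation ruled out by $3$-connectivity once the attachments of $B$ are taken into account. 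Hence for essentially every pair $u_i,u_j$ I obtain two disjoint $B$--$D$ paths landing exactly at $u_i$ and $u_j$.

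The heart of the argument, and the step I expect to be the main obstacle, is guaranteeing that some usable pair of landing points is quasi-diagonal. Three or more attachment points need not contain a quasi-diagonal pair on their own, so I would combine a pigeonhole analysis of the cyclic gaps between $u_1,\dots,u_k$ with \emph{cycle surgery}: if no attached pair sits at arc-distance $\ell(B)/2\pm1$, I reroute a sub-arc of $B$ through a path in $F$ to obtain a new even cycle $B'$, still disjoint from an odd cycle inside $F\cup V(B)$, whose attachment pattern does contain a quasi-diagonal pair; the evenness of $B'$ is controlled by the chord and parity bookkeeping supplied by Lemma~\ref{non-seperated even cycle}. Finally I would dispose of the degenerate small cases (short $B$ or $D$, and the case where $B$ carries its single chord) by a direct check, using the two even sub-arcs to furnish the quasi-diagonal endpoints. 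Once a quasi-diagonal pair together with two disjoint paths to $D$ is in hand, Lemma~\ref{two:cycle}(1) produces two cycles of consecutive even lengths, completing the proof.
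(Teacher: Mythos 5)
Your overall framework---normalize $B$ via Lemma~\ref{non-seperated even cycle}, then feed a quasi-diagonal pair with two disjoint paths to $D$ into Lemma~\ref{two:cycle}(1)---is exactly the paper's starting point, and you correctly identify the positional problem as the crux. But the two steps you offer to resolve it both have genuine gaps. First, the claim that ``for essentially every pair $u_i,u_j$ I obtain two disjoint $B$--$D$ paths landing exactly at $u_i$ and $u_j$'' is false: for a \emph{fixed} pair, all their neighbours in $F$ may lie in a single branch of $F$ hanging off one cut-vertex $v$ that separates them from $D$, and $3$-connectivity of $G$ does not exclude this for any particular pair---it only excludes that \emph{all} attachments of $C$ funnel through at most two such vertices. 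The paper makes this precise with its ``$B$-branch'' analysis: two vertices of $F$ admit two disjoint paths to $D$ if and only if they lie in distinct $B$-branches, and the contradiction is extracted globally, by propagating the condition ``every quasi-diagonal pair attaches into a common $B$-branch'' around the connected auxiliary graph $Qdi(C)$ to produce a cut of size at most two. Your per-pair Menger claim skips the part of the argument that actually does the work.

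Second, this propagation only succeeds when $\ell(C)\equiv 0\pmod 4$, because only then is $Qdi(C)$ a single cycle. When $\ell(C)\equiv 2\pmod 4$, $Qdi(C)$ splits into two odd cycles and the connectivity argument breaks; the paper handles this case by an entirely different mechanism that never touches $D$ or Lemma~\ref{two:cycle}: it attaches a pendant edge from each vertex of one odd cycle $Q$ of $Qdi(C)$ into a spanning tree $T$ of $F$, and shows via the closed-walk identity $\sum_i\ell(C_i)=(k-1)\ell(C)+\ell(W)$ with $k$ odd and $\ell(W)$ even that one of the cycles $C_i=P_i\cup Q_i$ is even, whence $C_i$ and $C_i'=P_i'\cup Q_i$ are the desired pair. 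Your proposed substitute---``cycle surgery'' rerouting an arc of $B$ through $F$ to fix the attachment pattern---is not substantiated: rerouting changes $\ell(B)$ and hence which pairs are quasi-diagonal, may destroy the chord/parity structure guaranteed by Lemma~\ref{non-seperated even cycle} (which was tied to the extremal choice of $B$), and you give no invariant showing the process terminates with a usable configuration. As written, the proof does not go through; the missing ideas are precisely the $B$-branch cut argument and the tree-walk parity argument.
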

	
	\begin{proof}
		By the condition, there exists an odd cycle $D$ in $G$ such that $G-V(D)$ contains an even cycle.
		Fix such $D$. Let $C$ be an even cycle in $G-V(D)$ such that the component $F$ of $G-V(C)$ containing $D$ is maximal.
		Using Lemma~\ref{non-seperated even cycle}, we see that $F=G-V(C)$ is connected.
		Moreover, $C$ has at most one chord, and if $u_0v_0$ is a chord of $C$,
		then both paths $\overrightarrow{C}[u_0,v_0]$ and $\overrightarrow{C}[v_0,u_0]$ have even lengths.
		
		By the 3-connectedness of $G$, there are three disjoint paths $P_1,P_2,P_3$ between $C$ and $D$ in $G$.
		If $C$ is a $C_4$, then there exist two paths $P_i,P_j$ such that their endpoints in $C$ are adjacent, which are quasi-diagonal.
		Thus by Lemma~\ref{two:cycle}, $G$ has two cycles of consecutive even lengths.
		From now on, we may assume that $\ell(C)\geq 6$.

		We construct an auxiliary graph $Qdi(C)$ on $V(C)$ such that two vertices are adjacent in $Qdi(C)$ if and only if they are quasi-diagonal in $C$.
		Note that if $\ell(C)=0\ (\bmod\ 4)$, then $Qdi(C)$ is a cycle, and if $\ell(C)=2\ (\bmod\ 4)$, then $Qdi(C)$ is a disjoint union of two odd cycles.
		We consider two cases as follows.
		
		\medskip
		
		\textbf{Case 1.} $\ell(C)=2 \ (\bmod ~4)$.
		
		\medskip
		Write $C=x_1x_2...x_{4t+2}x_1$ for some $t\geq 1$.
		Then in this case, $Qdi(C)$ is a disjoint union of two odd cycles (say $C_1, C_2$) of equal length such that $V(C_1)=\{x_1,x_3,...,x_{4t+1}\}$ and $V(C_2)=\{x_2,x_4,...,x_{4t+2}\}$.
		If $C$ has a chord $u_0v_0$, then as pointed out above, both paths $\overrightarrow{C}[u_0,v_0]$ and $\overrightarrow{C}[v_0,u_0]$ have even lengths,
		implying that both $u_0,v_0$ are contained in either $V(C_1)$ or $V(C_2)$.
		Let $Q$ be the cycle of $Qdi(C)$ not containing $u_0,v_0$.
		If $C$ has no chord, then let $Q$ be any cycle of $Qdi(C)$.
		
		Let $Q=u_1u_2\ldots u_k$. So $k=\ell(Q)=\ell(C)/2$ is odd.
		As $C$ has at most one chord, by the choice of $Q$, every vertex $u_i$ has exactly two neighbors in $V(C)$ and thus has a neighbor $v_i\in V(F)$ in the graph $G$.
		Let $T_0$ be a spanning tree of $F$ and $T$ be the tree obtained from $T_0$ by adding the $k$ edges $u_iv_i$.
		Let $P_i$ and $P'_i$ be the paths of $C$ between $u_i$ and $u_{i+1}$ of length $\ell(C)/2-1$ and $\ell(C)/2+1$, respectively,
		and let $Q_i$ be the unique path of $T$ between $u_i$ and $u_{i+1}$. Thus $C_i=P_i\cup Q_i$ and $C'_i=P'_i\cup Q_i$ are two cycles of lengths differing by two.
		
		Let $W=u_1Q_1u_2Q_2\ldots u_kQ_ku_1$ be a closed walk in $T$.
		If we view $T$ as a rooted tree with any fixed root $r$, then the parity of each $\ell(Q_i)$ agrees with the parity of $d_T(u_i,r)-d_T(u_{i+1},r)$, where $d_T(u_i,r)$ denotes the length of the unique path of $T$ between $u_i$ and $r$.
		This shows that $\ell(W)$ must be even.
		Clearly each $P_i$ have length $k-1$ and every edge of $C$ is contained in $k-1$ many paths $P_i$. Thus
		$$\sum_{i=1}^k\ell(C_i)=\sum_{i=1}^k\ell(P_i)+\sum_{i=1}^k\ell(Q_i)=(k-1)\ell(C)+\ell(W),$$
		which is even (as $k$ is odd and $\ell(W)$ is even). This implies that at least one cycle $C_i$ is even.
		Thus $C_i$ and $C'_i$ are two cycles of consecutive even lengths in $G$, as desired.
		
		\medskip
		
		\textbf{Case 2.}  $\ell(C)=0 \ (\bmod ~4)$.
		
		\medskip
		
		If $F=G-V(C)$ is 2-connected, let $B=F$; otherwise, let $B$ be the block of $F$ containing the odd cycle $D$.
		For any vertex $v\in V(B)$, let $H_v$ be the induced subgraph of $F$ with vertex set
		$$V(H_v)=\{u\in V(F): \mbox{there is a path between }u \mbox { and } v\mbox{ with all edges in }E(F)\backslash E(B)\}.$$
		So $H_v$ be the component of $F-E(B)$ containing $v$, and possibly $V(H_v)=\{v\}$.
		(In fact $|V(H_v)|\geq 2$ if and only if $v$ is a cut-vertex of $F$.)
		We call such graphs \emph{$B$-branches}.
		We claim that for any two vertices $u_1,u_2\in V(F)$, there are two disjoint paths between $\{u_1,u_2\}$ and $V(D)$ in $F$ if and only if $u_1,u_2$ are in distinct $B$-branches.
		If $u_1,u_2$ are in a same $B$-branch $H_v$, then each path between $\{u_1,u_2\}$ and $V(D)$ passes through $v$.
		Suppose now $u_1\in V(H_{v_1}),u_2\in V(H_{v_2})$ with $v_1\neq v_2$.
		Let $P_i$ be a path of $H_{v_i}$ between $u_i$ and $v_i$.
		Since $B$ is 2-connected, there are two disjoint paths $Q_1,Q_2$ from $v_1,v_2$, respectively to $D$.
		Then $P_1\cup Q_1$ and $P_2\cup Q_2$ are two disjoint paths between $\{u_1,u_2\}$ and $V(D)$ in $F$.
		
		Let $u_1,u_2$ be any two quasi-diagonal vertices in $C$.
		If each $u_i$ has a neighbor $w_i$ for $i\in \{1,2\}$ such that $w_1, w_2$ belong to distinct $B$-branches,
		then there are two disjoint paths between $C$ and $D$ such that their endpoints in $C$ are quasi-diagonal.
		By Lemma~\ref{two:cycle}, $G$ has two cycles of consecutive even lengths.
		So we conclude that for any two quasi-diagonal vertices $u_1,u_2$ in $C$ (i.e., an edge $u_1u_2$ in $Qdi(C)$),
		their neighbors in $F$ all belong to a common $B$-branch.	
		Recall that $Qdi(C)$ is a cycle here.
		If $C$ has no chord, then every vertex of $C$ has at least one neighbor in $F$ and thus all neighbors of $C$ are contained in a common $B$-branch say $H_v$.
		Then $v$ is a cut-vertex of $G$, a contradiction.
		It remains to consider when $C$ has a unique chord $u_0v_0$.
		Then $Qdi(C)-\{u_0,v_0\}$ has at most two components, say $L_1, L_2$.
		For each $i\in \{1,2\}$, $N_G(V(L_i))\cap V(F)$ is contained in a common $B$-branch, say $H_{v_i}$.
		If $v_1\neq v_2$, then by the above proof, $N_G(\{u_0,v_0\})\cap V(F)=\emptyset$; otherwise $v_1=v_2$, then $N_G(\{u_0,v_0\})\cap V(F)$ is contained in $H_{v_1}$ as well.
		Putting all together, we see that $\{v_1,v_2\}$ is a cut of $G$ with size at most two, a contradiction to the 3-connectedness of $G$.
		This proves Lemma~\ref{ two vertex-disjoint cycles: odd even}.
	\end{proof}
	
	\begin{lem}\label{ two vertex-disjoint cycles: odd even u}
		Let $G$ be a 3-connected graph.
		If $G$ has an even cycle and an odd cycle which share exactly one common vertex,
		then $G$ contains two cycles of consecutive even lengths.
	\end{lem}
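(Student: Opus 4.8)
The plan is to mirror the proof of Lemma~\ref{ two vertex-disjoint cycles: odd even} and reduce the statement either to a disjoint even/odd pair (to which Lemma~\ref{ two vertex-disjoint cycles: odd even} applies directly) or to the configuration of Lemma~\ref{two:cycle}(2). Let $B$ be the given even cycle, $D$ the given odd cycle, and $u$ their unique common vertex. Since $D-u$ is connected and $G-V(D-u)$ contains the even cycle $B$, I would first apply Lemma~\ref{non-seperated even cycle} with the connected subgraph taken to be $D-u$: choose an even cycle (still called $B$) in $G-V(D-u)$ for which the component $F$ of $G-V(B)$ containing $D-u$ is maximal. Lemma~\ref{non-seperated even cycle} then guarantees that $F=G-V(B)$ is connected, contains $D-u$, and that $B$ has at most one chord, whose two arcs both have even length. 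Because $B\subseteq G-V(D-u)$, we have $V(B)\cap V(D)\subseteq\{u\}$; if $u\notin V(B)$ then $B$ and $D$ are disjoint and Lemma~\ref{ two vertex-disjoint cycles: odd even} finishes the proof, so from now on $V(B)\cap V(D)=\{u\}$.

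Let $a,a'$ be the two quasi-diagonal vertices of $u$ on $B$. The key observation is that, by $3$-connectivity, every vertex of $B$ that is not an endpoint of the (unique) chord has degree $2$ in $B$ but degree at least $3$ in $G$, and hence has a neighbour in $F$. If one of $a,a'$, say $a$, is not a chord-endpoint, then $a$ has a neighbour in $F$; since $F$ is connected and contains $D-u$, I can extend this to a path from $a$ to $V(D)\setminus\{u\}$ whose interior lies in $F$ (and therefore avoids both $B$ and $u$). This is exactly a path as in Lemma~\ref{two:cycle}(2) whose origin $a$ is quasi-diagonal with $u$, so Lemma~\ref{two:cycle}(2) produces two cycles of consecutive even lengths.

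It remains to treat the case where the unique chord of $B$ joins $a$ and $a'$, and this is the step I expect to be the main obstacle. Since both arcs of the chord are even and $a,a'$ are quasi-diagonal with $u$, the short $a$--$a'$ arc must have length exactly two, so together with the chord it forms an odd triangle $T=apa'$, where $p$ is the antipode of $u$ on $B$; moreover $T$ is disjoint from $D$. Now $p$ and every vertex of the long arc $P^{\ast}=B-V(T)$ are non-chord-endpoints and thus have neighbours in $F$. Assuming $\ell(B)\ge 6$ (the case $B=C_4$ being dispatched directly, as in Lemma~\ref{ two vertex-disjoint cycles: odd even}), the path $P^{\ast}$ has at least three vertices with neighbours in the connected graph $F$, so $P^{\ast}\cup F$ contains a theta-subgraph and hence an even cycle $E$; this $E$ lies in $G-V(T)$ and is therefore disjoint from the odd triangle $T$. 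Applying Lemma~\ref{ two vertex-disjoint cycles: odd even} to the disjoint pair $E$ (even) and $T$ (odd) yields two cycles of consecutive even lengths, completing the proof. The delicate points are ensuring that the structural control from Lemma~\ref{non-seperated even cycle} survives the reduction and correctly handling the small cases ($B=C_4$, and the degenerate sub-configurations in which $F$ attaches to $P^{\ast}$ too few times to force a theta-subgraph).
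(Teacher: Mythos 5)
Your proposal is correct and follows essentially the same route as the paper: reduce via Lemma~\ref{non-seperated even cycle}, handle the case where a quasi-diagonal vertex of $u$ attaches to $F$ via Lemma~\ref{two:cycle}(2), and otherwise isolate the triangle $aa'p$ at the antipode and find a disjoint even cycle to feed into Lemma~\ref{ two vertex-disjoint cycles: odd even}. The only (valid) divergence is in the last step, where you build the theta-graph from the long arc of $B$ together with $F$, whereas the paper builds it from $D$ plus a path through a neighbour of $u$ on $B$ and $F$; both constructions work, and your worry about too few attachments is unfounded since every vertex of the long arc has a neighbour in $F$.
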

	
	\begin{proof}
		Let $B$ be an even cycle and $D$ an odd cycle in $G$ such that $V(B)\cap V(D)=\{u\}$.
		By Lemma~\ref{non-seperated even cycle}, there is an even cycle $C$ of $G-V(D-u)$ such that $F=G-V(C)$ is connected.
		If $u\notin V(C)$, then $C$ and $D$ are two disjoint cycles with opposite parities
		and thus by Lemma~\ref{ two vertex-disjoint cycles: odd even}, $G$ contains two cycles of consecutive even lengths.
		Thus we may assume that $u\in V(C)$.
		
		Let $u_1,u_2$ be the two vertices in $C$ each of which is quasi-diagonal with $u$.
		If $u_i$ has a neighbor $v$ in $F$, then there exists a path $P$ in $F$ from $v$ to $V(D-u)$,
		implying that $u_iv\cup P$ is a path between $C$ and $D$ whose endpoint in $C$ is quasi-diagonal with $u$.
		By Lemma~\ref{two:cycle}, $G$ has two cycles of consecutive even lengths.
		So $u_1, u_2$ have no neighbors in $F$.
		Then $u_1u_2$ must be a chord of $C$ (i.e., the only chord of $C$).
		We also have $\ell(C)\geq 6$; as otherwise $C$ is a $C_4$ say $C=uu_1wu_2u$, then $\{u,w\}$ is a 2-cut of $G$, a contradiction to the 3-connectedness of $G$.
		Let $u_3$ be a neighbor of $u$ on $C$ and $u_4$ be the common neighbor of $u_1,u_2$ on $C$.
		As $u_3$ has a neighbor (say $v$) in $F$, there exists a path $Q$ in $F$ from $v$ to $V(D-u)$.
		Now $uu_3v\cup Q\cup D$ forms a theta-graph and thus contains an even cycle $C'$.
		Clearly $C'$ is disjoint with the triangle $u_1u_2u_4u_1$.
		By Lemma~\ref{ two vertex-disjoint cycles: odd even}, $G$ contains two cycles of consecutive even lengths.
	\end{proof}
	
	\begin{lem}\label{two odd cycles}
		Let $G$ be a 3-connected graph. If there are two disjoint odd cycles, then $G$ contains two cycles of consecutive even lengths.
	\end{lem}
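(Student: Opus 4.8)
The plan is to reduce Lemma~\ref{two odd cycles} to the two preceding lemmas by manufacturing, out of the two disjoint odd cycles $D_1$ and $D_2$, an even cycle that is either disjoint from or meets in exactly one vertex some odd cycle. First I would record a clean reduction: if the graph $G-V(D_1)$ contains an even cycle $C$, then $C$ and $D_1$ form a disjoint even/odd pair and Lemma~\ref{ two vertex-disjoint cycles: odd even} immediately yields two cycles of consecutive even lengths; the same applies to $G-V(D_2)$. Since a theta-graph always contains an even cycle, I may therefore assume that neither $G-V(D_1)$ nor $G-V(D_2)$ has an even cycle, i.e.\ in each of them every block is an edge or an odd cycle, with $D_1$, respectively $D_2$, being one such block.

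Next I would bring in the connectivity. By Menger's theorem and $3$-connectivity, fix three pairwise disjoint paths $P_1,P_2,P_3$ from $D_1$ to $D_2$, with endpoints $a_i\in V(D_1)$ and $b_i\in V(D_2)$ and interiors avoiding $D_1\cup D_2$. The basic device is that any two of these paths, together with an arc of $D_1$ and an arc of $D_2$, close up into a cycle whose parity is adjustable to even, because the two arcs of an odd cycle joining a fixed pair of vertices have opposite parities. The obstacle is that such an even cycle meets each of $D_1,D_2$ along a whole arc, so it is not yet in the configuration demanded by Lemma~\ref{ two vertex-disjoint cycles: odd even} or Lemma~\ref{ two vertex-disjoint cycles: odd even u}. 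To remedy this I would search for a vertex $u$ of (say) $D_2$ possessing two neighbours on the $D_1$-side that lie in one common odd-cycle block of the cactus $G-V(D_2)$; the two opposite-parity arcs between those neighbours then let me close an even cycle through $u$ meeting $D_2$ in the single vertex $u$, and Lemma~\ref{ two vertex-disjoint cycles: odd even u} finishes.

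The residual and hardest case is the rigid one, where no such doubled connection exists: the middle $G-V(D_1)-V(D_2)$ is empty and the cross-edges form a perfect matching, so that $G$ is a cubic ``prism-like'' graph built from two odd cycles. Here neither of the previous two lemmas can be applied through $D_1$ and $D_2$ (for instance, deleting two adjacent columns of an aligned prism leaves a bipartite ladder, so no odd cycle is disjoint from a given $4$-cycle), and I would instead exhibit two cycles of consecutive even lengths directly. Scanning the matching around $D_1$, consecutive matching edges bound cycles of the form ``two matching edges plus an arc of $D_2$'' whose parities I can control, and a short counting argument in the spirit of the summation in the proof of Lemma~\ref{ two vertex-disjoint cycles: odd even} should produce two even cycles differing by two; in the aligned prism these are simply a $4$-cycle and a $6$-cycle. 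I expect this rigid cubic case, together with the bookkeeping over all cyclic orders and matchings, to be the main obstacle, precisely because the generic reductions to Lemmas~\ref{ two vertex-disjoint cycles: odd even} and~\ref{ two vertex-disjoint cycles: odd even u} degenerate there.
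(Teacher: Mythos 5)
Your overall skeleton does match the paper's: reduce to Lemmas~\ref{ two vertex-disjoint cycles: odd even} and~\ref{ two vertex-disjoint cycles: odd even u} whenever possible, and then treat separately the rigid case of two odd cycles joined by a perfect matching. However, there are two genuine gaps. First, your deduction that the failure of your search criterion (no vertex of $D_2$ with two neighbours in a common odd-cycle block of $G-V(D_2)$) forces $G-V(D_1)-V(D_2)=\emptyset$ with the cross-edges a perfect matching is unjustified: a middle vertex $w$ of degree $3$ with one neighbour on $D_1$, one on $D_2$ and one in the middle defeats your criterion without emptying the middle, and nothing in your argument excludes it. The paper obtains emptiness by a different mechanism: setting $F=G-V(B)$ (so every block of $F$ is $K_1$, $K_2$ or an odd cycle), any end-block $D'$ of $F$ other than $D$ sends two independent edges into $B$ by $3$-connectivity, so $B$ together with these edges and a path inside $D'$ is a theta-graph \emph{avoiding $D$}; its even cycle is disjoint from $D$ and Lemma~\ref{ two vertex-disjoint cycles: odd even} applies. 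This step --- attaching a theta-graph to $B$ from the end-blocks of $G-V(B)$, rather than looking from $D_2$ into the odd blocks of $G-V(D_2)$ --- is what your proposal is missing, and it is what actually kills the middle.

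Second, the rigid cubic case is not ``a short counting argument''; it is where most of the paper's work lies, and you explicitly leave it unresolved. Your candidate cycles (two matching edges, an edge of $D_1$, an arc of $D_2$) can be made even by parity, but producing a \emph{second} even cycle of length differing by exactly two requires the paper's case analysis on the odd-path lengths $\ell(D_o(uv))$: if some such length is at least $5$, one builds a new odd cycle $D^*$ and finds a disjoint even cycle in a theta-graph on the remaining vertices --- so, contrary to your remark, the earlier lemmas do not degenerate here but are reapplied to freshly constructed cycles; if all these lengths equal $3$, one either exhibits a $C_6$ and a $C_8$ explicitly or reaches a parity contradiction. Twisted matchings such as the Petersen graph (girth $5$, no $C_4$ at all) show that the aligned-prism picture of a $C_4$ plus a $C_6$ is not representative, so this case cannot be waved through with bookkeeping alone.
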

	
	\begin{proof}
		We choose two disjoint odd cycles $B,D$ in $G$ such that $|V(B)|$ is minimum.
		Let $F=G-V(B)$. If $F$ has an even cycle, then $G$ contains two cycles of consecutive even lengths by Lemma~\ref{ two vertex-disjoint cycles: odd even}.
		So $F$ has no even cycle. This also says that $F$ contains no theta-graph,
		hence every block of $F$ is either $K_1$, $K_2$ or an odd cycle. In particular, $D$ is a block of $F$.
		
		We claim that $F=D$. Suppose not, then there exists an end-block $D'$ of $F$ other than $D$.
		If $D'=K_1$, let $v$ be this isolated vertex.
		If $D'=K_2$, let $v\in V(D')$ be a non-cut-vertex of $F$.
		Since $G$ is 3-connected, $v$ has at least two neighbors $u_1,u_2$ in $B$.
		Then $B\cup\{u_1v,u_2v\}$ is a theta-graph and thus contains an even cycle disjoint with the odd cycle $D$.
		By Lemma~\ref{ two vertex-disjoint cycles: odd even}, $G$ contains two cycles of consecutive even lengths.	
		Now we consider when $D'$ is an odd cycle. Denote the unique cut-vertex of $F$ contained in $D'$ by $v$ (if it exists).
		Since $G$ is 3-connected, there are two independent edges $u_1v_1,u_2v_2$ between $B$ and $D'-v$.
		Let $P$ be the path of $D'-v$ between $v_1$ and $v_2$.
		Then $B\cup P\cup\{u_1v_2,u_2v_2\}$ is a theta-graph which is disjoint with $D$,
		and by Lemma~\ref{ two vertex-disjoint cycles: odd even} again, $G$ contains two cycles of consecutive even lengths.
		
		Note that $V(G)=V(B)\cup V(D)$ and both $B$ and $D$ are induced odd cycles in $G$.
		If there is a vertex $u\in V(B)$ having at least two neighbors in $D$,
		then $G[V(D)\cup\{u\}]$ has a theta-graph and thus contains an even cycle which shares exactly one common vertex with $B$.
		By Lemma~\ref{ two vertex-disjoint cycles: odd even u}, $G$ contains two cycles of consecutive even lengths.
		Hence we may assume that every $x\in V(B)$ has exactly one neighbor, denoted by $x'$, in $D$.
		By the same analysis we see that every $y\in V(D)$ has exactly one neighbor, denoted by $y'$, in $B$.
		It follows that $|V(B)|=|V(D)|$ and $G$ is a cubic graph with $|V(G)|=2\ (\bmod\ 4)$.

		For any edge $uv\in E(B)$, let $D_o(uv)$ be the unique odd path in $D$ with endpoints $u',v'$ and $D_e(uv)$ be the unique even path in $D$ with the endpoints $u',v'$.
		Similarly we can define $B_o(uv)$ and $B_e(uv)$ for every $uv\in E(D)$.
		If every edge $uv\in E(B)$ has $\ell(D_o(uv))=1$, then it is easy to find $C_4$ and $C_6$ in $G$, as desired.
		If there exists some $uv\in E(B)$ with $\ell(D_o(uv))\geq 5$,
		then $D^*:=D_e(uv) \cup \{uu',uv,vv'\}$ is an odd cycle;
		note that $D_o(uv)\setminus \{u',v'\}$ has at least three vertices each of which has a neighbor in $V(B)\setminus \{u,v\}$,
		thus the induced subgraph of $G$ on $\big(V(D_o(uv))\cup V(B)\big)\setminus \{u',v',u,v\}$ contains a theta-graph,
		which contains an even cycle that is disjoint with the odd cycle $D^*$;
		so by Lemma~\ref{ two vertex-disjoint cycles: odd even}, $G$ contains two cycles of consecutive even lengths.
		Therefore we may assume that there exists $uv\in E(B)$ with $\ell(D_o(uv))= 3$.
		Note that $D_o(uv) \cup \{uu',uv,vv'\}$ is a $C_6$ in $G$.	
		If there is some $f\in E(B)$ with $\ell(D_o(f))=1$, then $G$ contains $C_4$ and $C_6$.
		Hence from now on, we may assume that any edge $f\in E(B)$ has $\ell(D_o(f))=3$.
		At this point let us notice that as $B$ and $D$ are symmetric, all above assertions hold for edges in $D$ as well.
		Write $D_o(uv) = u'abv'$.
		Recall that $a',b'$ are the vertices in $B$ adjacent to $a,b$, respectively.
		Similarly we have $\ell(B_o(u'a))=\ell(B_o(ab))=\ell (B_o(v'b))=3$.
		If $B_o(u'a)$ does not contain $v$, then $B_o(u'a)\cup \{a'a,ab,bv',v'v,vu\}$ is a $C_8$ and so $G$ contains $C_6$ and $C_8$.
		So $B_o(u'a)$ contains $v$.
		Similarly, we infer that $B_o(bv')$ contains $u$.
		But this gives $\ell(B_o(ab))=5$, a contradiction.
	\end{proof}
	
	Finally we are ready to present the proof of Theorem~\ref{3-connected}.
	
	\begin{proof}[\bf Proof of Theorem~\ref{3-connected}]
		Let $G$ be a 3-connected graph with at least 6 vertices.
		Clearly, $\delta(G)\ge 3$.
		If $G$ is bipartite, then by Theorem~\ref{BV:Theorem}, $G$ contains two cycles of consecutive even lengths.
		So we assume that $G$ is non-bipartite.
		Let $D$ be a shortest odd cycle of $G$. So $D$ has no chord.
		
		If $G-V(D)$ contains a cycle, then by Lemma~\ref{ two vertex-disjoint cycles: odd even} or Lemma~\ref{two odd cycles},
		$G$ contains two cycles of consecutive even lengths.
		So we may assume that $G-V(D)$ is a forest. Let $F$ be a component (i.e., a tree) of $G-V(D)$ with maximum number of vertices.	
		If $D$ has a vertex $v$ that has three neighbors in $F$,
		then $G[V(F)\cup\{v\}]$ contains a theta-graph and thus contains an even cycle which shares exactly one common vertex $v$ with the odd cycle $D$.
		By Lemma~\ref{ two vertex-disjoint cycles: odd even u}, $G$ contains two cycles of consecutive even lengths.
		So we assume that every vertex in $D$ has at most two neighbors in $F$.
		Now we distinguish among two cases.
		
		\medskip
		
		\textbf{Case I.}  $D$ is a triangle, say $D=v_1v_2v_3v_1$.
		
		\medskip
		
		We note that every leaf of $F$ has at least two neighbors in $D$.
		So it is easy to see that $G$ has a $C_4$. In what follows we prove by showing that $G$ contains a $C_6$.
		
		Consider when $|V(F)|\leq 4$.	
		If $F=K_1$, then all vertices in $G-V(D)$ are isolated and adjacent to all three vertices of $D$ (as $\delta(G)\geq 3$).
		As $|V(G)|\geq 6$, it is easy to see that $G$ has a $C_6$.
		Suppose that $F=K_2$. Let $F=u_1u_2$.
		Then each of $u_1,u_2$ has at least two neighbors in $D$,
		and as $G$ is 3-connected, we have $N_D(u_1)\cup N_D(u_2)=V(D)$.
		Without loss of generality, let us assume that $u_1v_2,u_1v_3,u_2v_1,u_2v_3\in E(G)$.
		Since $|V(G)|\geq 6$, $G-V(D)$ has a second component which is a $K_1$ or $K_2$.
		Let $u_3$ be any vertex in this component, which has at least two neighbors in $D$.
		In any case of $N_D(u_3)$ one can find a $C_6$ in $G[\{u_1,u_2,u_3,v_1,v_2,v_3\}]$ easily.
		If $|V(F)|=3$ (i.e., $F$ is a path $u_1wu_2$),
		then there are two independent edges between $\{u_1,u_2\}$ and $V(D)$, say $u_1v_1,u_2v_2$,
		providing that $u_1wu_2v_2v_3v_1u_1$ is a $C_6$ in $G$.
		Now suppose that $|V(F)|=4$.
		Let $u_1,u_2$ be two leaves of $F$ and $P$ be the path of $F$ between $u_1$ and $u_2$.
		Then $\ell(P)= 2$ or 3.
		There are two independent edges between $\{u_1,u_2\}$ and $V(D)$, say $u_1v_1,u_2v_2$,
		implying that $u_1Pu_2v_2v_3v_1u_1$ (if $\ell(P)=2$) or $u_1Pu_2v_2v_1u_1$ (if $\ell(P)=3$) is a $C_6$ in $G$.
		
		Finally suppose that $F$ has order at least 5.
		Then there are at least $3|V(F)|- 2(|V(F)|-1)\ge 7$ edges between $D$ and $F$, implying that one vertex in $D$ has at least three neighbors in $F$, a contradiction.
		
		\medskip
		
		\textbf{Case II.}  $D$ has length at least 5.
		
		\medskip
		
		Let $D=v_1v_2\ldots v_kv_1$, where $k\geq 5$ is odd.
		If $F$ consists of one vertex $u$, then $u$ has at least three neighbors in $D$ and there exists a shorter odd cycle than $D$ in $G[V(D)\cup \{u\}]$, a contradiction.
		So $F$ is a tree with at least two vertices.
		Then $F$ has at least two leaves, each of which has at least two neighbors in $D$.
		Let $u$ be an arbitrary leaf of $F$.
		We claim that $u$ has exactly two neighbors in $D$, and its two neighbors has distance 2 in $D$.
		Suppose to the contrary that there exists two neighbors $v_1,v_2$ of $u$ has distance in $D$ not equal to 2.
		If $d_D(v_1,v_2)=1$, then $uv_1v_2u$ is a triangle, contradicting the choice of $D$.
		Now assume that $d_D(v_1,v_2)\geq 3$.
		Then both cycles $\overrightarrow{D}[v_1,v_2]\cup \{uv_1, uv_2\}$ and $\overrightarrow{D}[v_2,v_1]\cup \{uv_1, uv_2\}$ have length less than $\ell(D)$,
		but one of them is odd, a contradiction. This proves the claim.
		
		Without loss of generality we assume that $N_D(u)=\{v_1,v_3\}$.
		Note that $G$ has a 4-cycle $v_1v_2v_3uv_1$.
		We further claim that there are no edges between $V(D)\backslash\{v_1,v_2,v_3\}$ and $F$.
		Suppose not. Let $v_iu_1$ be such an edge, where $i\in [4,k]$ and $u_1\in V(F)\backslash\{u\}$.
		Let $P$ be the unique path of $F$ between $u$ and $u_1$.
		Let $C_1=uv_3v_4v_5\ldots v_iu_1\cup P$ and $C_2=uv_1v_kv_{k-1}\ldots v_iu_1\cup P$.
		Then $$\ell(C_1)+\ell(C_2)=\ell(C)+2+2\ell(P)$$ is odd.
		This implies that one of $C_i$'s is even.
		Without loss of generality we assume that $C_1$ is even.
		Let $C'_1=uv_1v_2v_3\ldots v_iu_1\cup P$.
		Then $C_1,C'_1$ are two cycles of consecutive even lengths.
		
		Let $u'$ be a second leaf of $F$.
		Recall that $u'$ has two neighbors in $D$ with distance 2.
		By the previous claim, we have $N_D(u')=\{v_1,v_3\}$.
		Since every vertex in $D$ has at most two neighbors in $F$, $F$ has exactly two leaves, i.e., $F$ is a path.
		Moreover, for every vertex $w\in V(F)\backslash\{u,u'\}$, $w$ has at least one neighbors in $D$ and thus $N_D(w)=\{v_2\}$.
		If $|V(F)|\geq 4$, then $v_2$ is adjacent to every vertex in $V(F)\backslash\{u,u'\}$ and thus $G$ has a triangle,
		contradicting the choice of $D$.
		If $F=K_2$, then $uv_1v_2v_3u'u$ is a $C_5$, implying that $\ell(D)=k=5$. Thus $uv_1v_5v_4v_3u'u$ is a $C_6$.
		Finally if $F$ is a path with three vertices, then $uv_1v_2v_3u'\cup F$ is a $C_6$.
		Recall that $G$ has a $C_4$.
		So $G$ has two cycles of consecutive even lengths.
		This finishes the proof of Theorem~\ref{3-connected}.
	\end{proof}
	
	\section{Concluding remarks}\label{sec:conclusion}
	\subsection{A generalization of Theorem~\ref{admissible: k=3}}
	Following \cite{GHLM}, we say $k$ paths $P_1,P_2,\ldots,P_k$ are {\it admissible},
	if $\ell(P_1)\geq 2$ and $\ell(P_1), \ldots,\ell(P_k)$ form an arithmetic progression of length $k$ with common difference one or two.
	The following is key in \cite{GHLM}.
	
	\begin{thm}[Gao, Huo, Liu and Ma, \cite{GHLM}; see Theorem 3.1]\label{admissible}
		Let $G$ be a graph with $x, y\in V(G)$ such that $G+xy$ is 2-connected.
		If every vertex of $G$ other than $x$ and $y$ has degree at least $k + 1$, then there exist $k$ admissible paths from $x$ to $y$ in $G$.
	\end{thm}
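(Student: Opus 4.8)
The plan is to prove Theorem~\ref{admissible} by induction on $|V(G)|$, closely mirroring the argument given above for Theorem~\ref{admissible: k=3} but upgrading every step so that it produces $k$ paths whose lengths form a genuine arithmetic progression rather than merely two paths of lengths differing by two. As before I would assume $xy\notin E(G)$ and $d_G(y)\ge d_G(x)$, and keep in mind the two extremal configurations $K_{k+2}$ (which supplies an AP of common difference one, via the $x$--$y$ paths of lengths $2,3,\dots,k+1$) and $K_{k+1,n}$ (which, being bipartite, supplies an AP of common difference two) as a guide to which common difference to expect. The base case is the smallest admissible order, where the $k$ paths are exhibited by hand.

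The workhorse of the induction is the \emph{contraction of $X:=N_G(x)$}. When no vertex of $G-\{x,y\}$ has two neighbours in $X$ — equivalently, when $G-y$ has no $4$-cycle through $x$ — the graph $G^\ast$ obtained from $G-x$ by contracting $X$ to a single vertex $x^\ast$ preserves the degree of every remaining internal vertex, so the degree-$\ge k+1$ hypothesis survives. If $G^\ast+x^\ast y$ is $2$-connected I apply induction to it directly; otherwise $x^\ast$ is its only cut-vertex and I apply induction to the block containing $y$ with endpoints $x^\ast,y$, exactly as in the proof above. Either way this yields $k$ admissible $x^\ast$--$y$ paths, and prepending one edge $xx_i$ (with $x_i\in X$) to each lifts them to $k$ $x$--$y$ paths in $G$ whose lengths are uniformly increased by one. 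Since a uniform shift preserves both the AP structure and its common difference, and the requirement $\ell(P_1)\ge 2$ is only strengthened, this step is clean. The degenerate situation in which the contraction collapses everything — namely $N_G(y)\subseteq X$, forcing $N_G(y)=X=N_G(x)$ — is handled exactly as above: one passes to a block $D'$ disjoint from the trivial block, contracts inside it with endpoints $u_1,u_2$, obtains $k$ admissible $u_1$--$u_2$ paths by induction, and reassembles them into $k$ $x$--$y$ paths of the form $xx_i\cup P'_i\cup u_1y$, again a uniform shift.

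The remaining case is when $G-y$ does contain a $4$-cycle $C=xx_1ax_2x$ through $x$, which is precisely the obstruction that would drop $d_{G^\ast}(a)$ below $k+1$ and break the contraction. Here I would let $F$ be the component of $G-V(C)$ containing $y$, use the $2$-connectivity of $G+xy$ to control how $F$ attaches to $C$, apply the induction hypothesis to the strictly smaller graph $G-V(F)$ with the reassigned endpoint $a$, and then extend all $k$ resulting $x$--$a$ paths by a single common $a$--$y$ path routed through $F$; extending every path by the same suffix is once more a uniform shift and so carries the AP through intact.

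The main obstacle is exactly this last case. For $k=2$ the proof above short-circuits it: if $F$ is adjacent to $x_1$ one simply writes down the two paths $xx_1\cup P$ and $xx_2ax_1\cup P$ differing by two and stops, since two such paths already constitute the conclusion. For general $k$ this shortcut is unavailable — two paths are not $k$ admissible paths — so every attachment pattern of $F$ to $C$ must instead be funneled back through the induction hypothesis, and I expect the real work to lie in verifying that the reduced instance $G-V(F)$ with its new endpoint still meets \emph{both} the ``degree at least $k+1$ for internal vertices'' and the ``remains $2$-connected after adding the endpoint edge'' hypotheses under every such pattern, so that the recursion is legitimate. A secondary but persistent difficulty is bookkeeping the common difference: one must ensure that the value one-or-two returned by a recursive call stays consistent after the uniform shifts, rather than being forced to splice a difference-one progression onto a difference-two one.
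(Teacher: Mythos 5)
First, note that the paper does not prove Theorem~\ref{admissible} at all: it is quoted from \cite{GHLM} (Theorem~3.1 there), and the only indication of its proof given here is the sketch in Section~\ref{sec:conclusion}, which describes a double induction on $k$ and on $|V(G)|+|E(G)|$ in which one locates a local structure at $x$ (a clique or a complete bipartite graph), contracts a set $W$ so that internal degrees drop by at most some $t>0$, and recurses at the parameter $k-t$. Your plan --- a single induction on $|V(G)|$ with $k$ held fixed, transplanted from the proof of Theorem~\ref{admissible: k=3} --- is a genuinely different and substantially simpler strategy, and it breaks exactly where you suspect it does.

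The gap is Case~1, the $4$-cycle $C=xx_1ax_2x$ in $G-y$. For $k=2$ this case is dispatched by a shortcut ($xx_1\cup P$ and $xx_2ax_1\cup P$) that produces only two paths, so that proof never needs the recursion to survive an attachment of $F$ to $x_1$ or $x_2$; for general $k$ you must, and the proposed reduction to $G-V(F)$ with reassigned endpoint $a$ fails there for two concrete reasons. First, if $F$ has neighbours in $\{x_1,x_2,a\}$ other than the designated endpoint, deleting $V(F)$ lowers the degrees of those vertices, so $G-V(F)$ need not satisfy the degree-$(k+1)$ hypothesis and the recursive call is illegitimate. Second, even when the hypotheses do survive, the reduced instance can simply be too small: take $G=K_{k+2}$ with $k\ge 3$; then $F=G-V(C)$ is adjacent to all of $x_1,x_2,a$, and $G-V(F)$ is the $4$-cycle $C$ itself, which contains only two $x$--$a$ paths. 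In $K_{k+2}$ the $k$ admissible paths must enter $V(F)$ through several different vertices of $C$ and use suffixes of several different lengths, so no ``common $a$--$y$ suffix'' construction can produce them. This is precisely the situation in which \cite{GHLM} invokes the clique-at-$x$ local structure together with the induction on $k$ (contract, lose $t$ in the degree, recurse at $k-t$); without some substitute for that mechanism --- which your write-up flags as ``the real work'' but does not supply --- the induction does not close.
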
	
	
	We can prove a generalization of Theorem~\ref{admissible: k=3} as follows, which may be useful for the general case of Conjecture~\ref{Conj-V}.
	
	\begin{thm}\label{admissible: even}\label{Thm:gen}
		Let $G$ be a graph with $x, y\in V(G)$ such that $G+xy$ is 2-connected. If every vertex of $G$
		other than $x$ and $y$ has degree at least $k + 1$, and any edge $uv\in E(G)$ with $\{u,v\}\cap \{x,y\} = \emptyset$ has $d_G(u)+d_G(v)\geq 2k+3$,
		then $G-xy$ contains either $k+1$ paths from $x$ to $y$ of consecutive lengths or $k$ paths from $x$ to $y$ whose lengths form an arithmetic progression with common difference two.
	\end{thm}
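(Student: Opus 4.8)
The plan is to prove Theorem~\ref{Thm:gen} by induction on $|V(G)|$, extending the argument used for the case $k=2$ (Theorem~\ref{admissible: k=3}, which it recovers by taking $k=2$). After the usual normalization $d_G(y)\ge d_G(x)$, $xy\notin E(G)$, and the disposal of the smallest base cases, I would phrase the target as the existence of a \emph{good family}: a set of $x$--$y$ paths in $G-xy$ whose length set either consists of $k+1$ consecutive integers or is a $k$-term arithmetic progression of common difference $2$. The induction rests on two elementary closure properties of good families. First, concatenating every path of a good family of $u$--$v$ paths with one fixed $v$--$w$ path (internally disjoint from them) shifts all lengths by a constant and so preserves goodness. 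Second, prepending a gadget that offers two routes whose lengths differ by $2$ replaces a length set $L$ by $(L+a)\cup(L+a+2)$; here $k+1$ consecutive lengths widen to $k+3$ consecutive, and a $k$-term difference-$2$ progression lengthens to a $(k+1)$-term one, so goodness is again preserved.

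The case split follows the $k=2$ proof, with the degree-sum hypothesis $d_G(u)+d_G(v)\ge 2k+3$ as the new essential ingredient. Its clean consequence is that the internal vertices of degree exactly $k+1$ form an independent set, since any internal edge joining two of them would violate the bound. In \textbf{Case 2}, where $G-y$ has no $4$-cycle through $x$, every vertex outside $X:=N_G(x)$ has at most one neighbour in $X$, so contracting $X$ to a single vertex $x^*$ changes no other vertex's degree; hence both the minimum-degree bound $k+1$ and the degree-sum bound $2k+3$ are inherited by the block $B$ of the contracted graph containing $y$. If $|V(B)|\ge 3$ one recurses on $B$ with terminals $x^*,y$, obtains a good family, and lifts it to $G$ by restoring $x$ (a uniform shift by $1$, since $x^*$ is used only as an endpoint and each contracted edge $x^*v$ corresponds to a genuine edge $x_iv$). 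The degenerate possibility $N_G(y)=N_G(x)=X$ is treated as in the $k=2$ argument: one passes to another block $D'$, contracts $X\setminus\{u_1\}$ to a vertex $u_2$ (again degree-safe by the no-$C_4$ condition), recurses with terminals $u_1,u_2$, and reassembles.

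In \textbf{Case 1} there is a $4$-cycle $C=xx_1ax_2x$ in $G-y$; let $F$ be the component of $G-V(C)$ that contains $y$. When $F$ meets $C$ only in $\{x,a\}$, deleting $F$ and recursing with the unconstrained terminals $x,a$ yields a good family of $x$--$a$ paths, which the first closure property extends along a fixed $a$--$y$ path through $F$ (such a path exists because $2$-connectivity of $G+xy$ forces $F$ adjacent to $a$). The remaining, genuinely harder configurations are those in which $F$ attaches to a neighbour $x_1$ of $x$: here I would route through $x_1$, using $C$ as the shift-by-$2$ gadget (front segments $xx_1$ and $xx_2ax_1$ of lengths $1$ and $3$) so that, by the second closure property, it suffices to build a good family of $x_1$--$y$ paths inside the part of $G$ hanging off $x_1$.

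The \textbf{main obstacle} is exactly this last reduction. Isolating a subgraph on which to recurse for the $x_1$--$y$ paths must be done without contraction (contracted edges would falsify lengths), so some vertices of $F$ lose the degree they drew from $\{x,a,x_2\}$, threatening the hypothesis. This is precisely where the degree-sum bound earns its keep: it pins every such degree-loss to a vertex whose internal neighbours are comparatively high-degree, so the independent-set structure of the tight vertices should supply the slack needed to certify that the recursion graph still meets the minimum-degree and degree-sum conditions (and that the relevant auxiliary graph, with its terminal edge added, is $2$-connected). I expect the bulk of the write-up to consist of this slack bookkeeping, carried out uniformly over all the reductions, together with the parity check that decides, at each reassembly, whether the resulting good family lands in the $(k+1)$-consecutive or the $k$-term difference-$2$ alternative.
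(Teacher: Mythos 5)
Your architecture is genuinely different from the paper's. The paper proves Theorem~\ref{Thm:gen} by a double induction on $|V(G)|+|E(G)|$ and on $k$, following the proof of Theorem~\ref{admissible} from \cite{GHLM}: it locates a local structure at $x$ (a clique $K$ of order $k+1$ or a complete bipartite graph), contracts a suitable set $W$ so that every internal degree drops by at most $t$, and then invokes the hypothesis for $k-t$; the delicate clique case with an outside vertex $u$ having two neighbours in $K\setminus\{x\}$ is settled by building $k+1$ consecutive paths directly inside the dense graph $K\cup\{u\}$, and Theorem~\ref{admissible: k=3} is used as the base of the induction on $k$ rather than being re-derived. Your Case 2 (no $C_4$ through $x$ in $G-y$) does generalize: contracting $N_G(x)$ is degree-preserving there, and your lifting and shifting bookkeeping, as well as the closure properties of ``good families,'' are correct.

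The genuine gap is in Case 1, in the subcase where $F$ attaches to $x_1$. To prepend both front segments $xx_1$ and $xx_2ax_1$ to one family of $x_1$--$y$ paths, every path in that family must avoid all of $x$, $a$ and $x_2$, so the graph on which you recurse with parameter $k-1$ must exclude these three vertices. A vertex $v\in V(F)$ with $d_G(v)=k+1$ adjacent to two of $\{x,a,x_2\}$ then has degree $k-1$ in the recursion graph, strictly below the minimum degree $k$ that the $(k-1)$-instance requires; likewise an internal edge both of whose ends lose two neighbours drops from degree sum $2k+3$ to $2k-1<2(k-1)+3$. The degree-sum hypothesis does not repair this: $d_G(v)+d_G(a)\ge 2k+3$ forces $a$, not $v$, to have large degree, and the independence of the tight internal vertices is perfectly consistent with $v$ being tight while $a$ and $x_2$ are not. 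So the ``slack bookkeeping'' you defer to the write-up is not a routine verification -- it is exactly the point where a direct generalization of the $k=2$ proof breaks down, and it is why the paper resorts to contraction (which deletes no vertex and trades each unit of degree loss for a unit decrease of $k$) together with a direct construction of $k+1$ consecutive paths inside a dense local structure. Without a concrete replacement for this step, the proposal does not yield a proof.
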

	
	This can be proved by following the original proof of Theorem~\ref{admissible} in \cite{GHLM} with some necessary modifications.
	Here we give a sketch.
	We prove by induction on $V(G)+E(G)$ and $k$.
	The base case is just Theorem~\ref{admissible: k=3}.
	For the inductive proof, the main idea is to consider a new graph $G'$ obtained from $G$ by contracting some special set $W$ of vertices
	such that $d_{G'}(v)\ge d_G(v)-t$ holds for some integer $t>0$ and vertices $v\notin \{x,y\}$; then we can apply induction hypothesis for $k-t$ to $G'$.
	To get the set $W$, in the proof of Theorem~\ref{admissible} we need to find some local structure containing $x$, which is either a clique (see Lemma~3.6) or a complete bipartite graph (see Lemma~3.7).
	In the case that a complete bipartite graph contains $x$, one can always concatenate consecutive even paths and admissible paths, so this part of proof (after Lemma~3.7) does not need any modification.
	So let us assume that there exists a clique $K$ of size $k+1$ containing $x$.
	Here we need more extra discussions.
	In this case, since any edge $uv$ with $\{u,v\}\cap \{x,y\} = \emptyset$ satisfies $d_G(u)+d_G(v)\ge 2k+3$,
	we have $V(G)\setminus(K\cup \{y\})\ne \emptyset$.
	If there is no vertex $u$ with at least two neighbors in $K\setminus\{x\}$, then we contract $K\setminus\{x\}$ and the proof is similar to that of Theorem~\ref{admissible};
	otherwise, let $K'=K\cup \{u\}$ and $P$ be a path from $y$ to $K'$ not through $x$, then we can get $k+1$ consecutive paths from $x$ to $y$ in $G$.
	
	\subsection{Related open problems}
	Our main result provides the tight average degree condition for forcing two cycles of consecutive even lengths and thus settles the case $k=2$ of Conjecture~\ref{Conj-V}.
	It would be very intersecting and natural to pursue the remaining cases (for $k\geq 3$) of the conjecture.
	In fact there also is a slightly stronger conjecture made by Sudakov and Verstra\"ete \cite{SV17}.
	\begin{conj}[Sudakov and Verstra\"ete \cite{SV17}; see Conjecture~8]\label{Conj-SV}
		Let $k\ge 1$. If $G$ is a graph with a maximum number of edges that does not contain cycles of $k$ consecutive even lengths, then every block of $G$ is a complete graph of order at most $2k + 1$.
	\end{conj}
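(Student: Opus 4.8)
The plan is to prove Conjecture~\ref{Conj-SV} by induction on $k$, anchored at the case $k\le2$ handled by Theorem~\ref{main}, following the scaffolding of the proof of Theorem~\ref{main} with its path ingredient upgraded to the $k$-analogue Theorem~\ref{Thm:gen}. Let $G$ be a minimal counterexample: a graph with the maximum number of edges among those with no $k$ cycles of consecutive even lengths, some block of which is not a complete graph of order at most $2k+1$, and with $|V(G)|$ smallest possible. As in the opening of the proof of Theorem~\ref{main}, local deletion arguments should force $\delta(G)\ge k+1$ and force every edge $uv$ with $\{u,v\}$ avoiding the relevant cut to satisfy $d_G(u)+d_G(v)\ge 2k+3$ --- exactly the hypotheses of Theorem~\ref{Thm:gen}. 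The first real step is to reduce to the $2$-connected case: since every cycle lies in a single block, the even cycle lengths of $G$ are the union over its blocks of their even cycle lengths, and adding an edge to $G$ either enlarges one block or merges two blocks at a cut-vertex. Analyzing these two operations --- in particular ruling out that a merge splices the nearly full length window of one block onto that of another to complete a progression of $k$ terms --- should reduce the problem to showing that a $2$-connected, edge-maximal graph $B$ with no $k$ consecutive even lengths is a clique $K_{\le 2k+1}$.

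For the $2$-connected block $B$, I would split according to whether $B$ is $3$-connected. If $B$ has a $2$-cut $\{x,y\}$, write $B$ as the union of two parts $H_1,H_2$ glued along $\{x,y\}$; applying Theorem~\ref{Thm:gen} inside $H_1+xy$ produces either $k+1$ paths from $x$ to $y$ of consecutive lengths or $k$ paths whose lengths form an arithmetic progression with common difference two. Concatenating this family with a single $x$--$y$ path of appropriately chosen parity in $H_2$ yields $k$ cycles whose even lengths form an arithmetic progression of difference two, that is, $k$ cycles of consecutive even lengths in $B$ --- a contradiction --- provided $H_2$ supplies a path of the needed parity. The remaining subcase is when $H_2-xy$ is bipartite, which forces all $x$--$y$ paths in $H_2$ to share a single parity; here, as in the $k=2$ argument, one must instead produce $k$ consecutive even lengths inside $H_2$ directly, which calls for a $k$-analogue of Theorem~\ref{BV:Theorem} for bipartite graphs and is a secondary obstacle in its own right.

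The genuinely hard case, and the reason Conjecture~\ref{Conj-SV} is open for $k\ge 3$, is when $B$ is $3$-connected. Here one cannot simply invoke a generalization of Theorem~\ref{3-connected}: that theorem does \emph{not} survive the passage from $k=2$ to $k\ge 3$, since $K_{3,n}$ is $3$-connected and arbitrarily large yet has only the two even cycle lengths $4$ and $6$, hence no $3$ consecutive even lengths. Thus $3$-connectivity alone is far too weak, and the argument must exploit edge-maximality essentially: the goal becomes to show that a $3$-connected, non-complete graph with no $k$ consecutive even cycle lengths always admits a non-edge whose addition still avoids $k$ consecutive even lengths, forcing an edge-maximal such graph to be complete --- and then of order at most $2k+1$, since any $K_{2k+2}$ already realizes the $k$ consecutive even lengths $4,6,\dots,2k+2$. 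To build the required cycle lengths I would generalize the gadget machinery of Lemmas~\ref{two:cycle}--\ref{two odd cycles}: rather than a single pair of quasi-diagonal attachment points on an even cycle, I would seek an even cycle carrying $k-1$ independent length-shifting rerouting moves together with an odd cycle supplying the parity change, and rerun the maximal-component analysis of Lemma~\ref{non-seperated even cycle} to locate such a configuration in any non-clique $3$-connected graph meeting the density threshold. Simultaneously controlling all $k$ target lengths, and handling the sparse and bipartite $3$-connected graphs where the edge count barely meets the extremal bound, is where I expect the main difficulty to lie and where ideas genuinely beyond the $k=2$ proof will be required.
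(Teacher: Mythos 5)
This statement is Conjecture~\ref{Conj-SV}, which the paper does \emph{not} prove: it is quoted from Sudakov and Verstra\"ete and explicitly left open (the paper only settles the $k=2$ case of the weaker Conjecture~\ref{Conj-V}, via Theorem~\ref{main}). So there is no paper proof to compare against, and the only question is whether your proposal itself constitutes a proof. It does not, and you say as much yourself: it is a plan whose two load-bearing steps are left unproven. First, in the $2$-cut case, when $H_2-xy$ is bipartite you need a $k$-analogue of Theorem~\ref{BV:Theorem} producing $k$ consecutive even cycle lengths inside $H_2$; you flag this as ``a secondary obstacle in its own right'' rather than supply it. Second --- and this is the fatal gap --- the $3$-connected case is left entirely open. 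Your observation that $K_{3,n}$ kills any naive generalization of Theorem~\ref{3-connected} to $k\ge 3$ is correct and valuable: it shows that for $k\ge 3$ connectivity alone gives nothing and density must enter essentially. But no argument is offered for how an edge-maximal, $3$-connected, non-complete graph must admit an augmenting non-edge; a proposal whose hardest case reads ``this is where ideas genuinely beyond the $k=2$ proof will be required'' is a statement of the problem, not a solution to it.

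Two further points where the sketch would need repair even as a plan. The deletion argument you import from the proof of Theorem~\ref{main} to force $\delta(G)\ge k+1$ and the degree-sum condition $d_G(u)+d_G(v)\ge 2k+3$ relies there on the quantitative edge threshold $e(G)\ge\frac52(n-1)$, whereas your hypothesis is edge-maximality; without first establishing the bound $e(G)\le\frac12(2k+1)(n-1)$ of Conjecture~\ref{Conj-V} --- itself open for $k\ge 3$ --- that bookkeeping does not transfer. Also, adding an edge can merge an entire path of blocks in the block tree, not merely ``enlarge one block or merge two blocks at a cut-vertex,'' so the reduction to a single $2$-connected block needs a more careful statement. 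In short: the conjecture remains open, and your proposal, while it correctly maps the terrain (the reduction scaffolding, Theorem~\ref{Thm:gen} as the path ingredient, and the $K_{3,n}$ obstruction), does not close it.
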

	
	Recall the definition of $c_{\ell,k}$, that is the least constant such that every $n$-vertex graph with at least $c_{\ell,k}\cdot n$ edges contains a cycle of length $\ell$ modulo $k$.
	Erd\H{o}s \cite{Erd95} asked for the exact value of $c_{\ell,k}$ for all integers $k>\ell\geq 0$.
	For any $k>\ell\geq 3$, Sudakov and Verstra\"ete \cite{SV17} determined the value of $c_{\ell,k}$ up to a constant factor,
	by showing that $d_{\ell,k}\leq c_{\ell,k}\leq 96\cdot d_{\ell,k}$, where $d_{\ell,k}$ denotes the largest possible average degree of any $k$-vertex $C_\ell$-free graph.
	For the case when $\ell\geq 4$ is even,
	the above result of Sudakov-Verstra\"ete shows that determining the order of the magnitude of $c_{\ell,k}$ is as hard as the famous extremal problem of determining the Tur\'an number of the even cycle $C_\ell$;
	and in this case, Sudakov and Verstra\"ete \cite{SV17} further conjectured that $\lim_{k\to \infty} c_{\ell,k}/d_{\ell,k}=1$.
	For the case when $\ell=2$, we like to make the following conjecture.
	\begin{conj}
		For any integer $k\geq 2$, it holds that $c_{2,2k}=(2k+1)/2$.
	\end{conj}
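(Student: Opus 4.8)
The plan is to prove the two bounds $c_{2,2k}\ge (2k+1)/2$ and $c_{2,2k}\le (2k+1)/2$ separately; the lower bound is routine and the upper bound is where the work lies. For the lower bound I would use the natural extremal family. Let $G$ be any connected graph all of whose blocks are cliques $K_{2k+1}$ glued at cut-vertices. If $G$ has $b$ blocks then $|V(G)|=2kb+1$ and $e(G)=b\binom{2k+1}{2}=\frac{(2k+1)(|V(G)|-1)}{2}$. Since every cycle of a graph lies in a single block and $K_{2k+1}$ has cycles only of lengths $3,4,\dots,2k+1$, whose residues modulo $2k$ run through $\{0,1,3,4,\dots,2k-1\}$ and miss $2$, the graph $G$ has no cycle of length $2\pmod{2k}$. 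As $e(G)/|V(G)|\to (2k+1)/2$ along this family, this yields $c_{2,2k}\ge (2k+1)/2$.

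For the upper bound I would reduce to Conjecture~\ref{Conj-V}. The elementary but crucial observation is that $k$ cycles of consecutive even lengths $2m,2m+2,\dots,2m+2(k-1)$ occupy, modulo $2k$, the $k$ distinct even residues $2m,\dots,2m+2(k-1)$, which together exhaust all even residues $0,2,\dots,2k-2$; in particular one of these cycles has length $2\pmod{2k}$. Hence if $e(G)\ge \frac{2k+1}{2}|V(G)|>\frac{(2k+1)(|V(G)|-1)}{2}$ and Conjecture~\ref{Conj-V} holds for this $k$, then $G$ has $k$ consecutive even cycles and therefore a cycle of length $2\pmod{2k}$, giving $c_{2,2k}\le (2k+1)/2$. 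This mirrors the deduction of Corollary~\ref{cor} from Theorem~\ref{main} in the case $k=2$, so the task collapses to proving Conjecture~\ref{Conj-V} for general $k$.

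To prove Conjecture~\ref{Conj-V} I would follow the template of Theorem~\ref{main}. Passing to a minimum counterexample $G$, deletion arguments should force $G$ to be connected with $\delta(G)\ge k+1$ and every edge of $G$ having degree sum at least $2k+3$, and a block-decomposition argument should reduce to the $2$-connected case. The heart of the matter is the highly-connected core: I expect the correct generalization of Theorem~\ref{3-connected} to read that every $(k+1)$-connected graph on at least $2k+2$ vertices contains $k$ cycles of consecutive even lengths. This threshold is forced by $K_{k,n}$, $K_{k+2}$ and $K_{2k+1}$, each of which is highly connected yet avoids $k$ consecutive even lengths. In the bipartite case this core statement already follows from the minimum-degree theorem of \cite{GHLM} (which turns $\delta\ge k+1$ into $k$ cycles in arithmetic progression with common difference one or two, hence difference two when the graph is bipartite); so, exactly as in the proof of Theorem~\ref{3-connected}, the non-bipartite case is the crux, and one would need to extend the disjoint-cycle machinery of Lemmas~\ref{non-seperated even cycle}--\ref{two odd cycles} to manufacture an entire arithmetic progression of $k$ even cycle lengths rather than a single consecutive pair.

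The main obstacle is twofold. First, the core result itself: the quasi-diagonal and parity-counting devices that so cleanly produce two consecutive even cycles must be upgraded to control $k$ lengths simultaneously, and the clean ``sum of cycle lengths is even'' argument no longer suffices. Second, unlike the case $k=2$, a minimum counterexample that is not $(k+1)$-connected may possess separators of every size between $2$ and $k$; handling these requires a multi-terminal strengthening of the admissible-path results, whereas the paper supplies only the two-terminal Theorem~\ref{Thm:gen}. Overcoming both---especially engineering the cycle-combining lemmas in the $(k+1)$-connected core---is where I expect the essential difficulty to reside.
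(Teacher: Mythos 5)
This statement is an open conjecture in the paper, not a theorem: the authors prove only the case $k=2$ (via Corollary~\ref{cor}) and remark that the general case would follow from Conjecture~\ref{Conj-V} or Conjecture~\ref{Conj-SV}. Your proposal gets exactly as far as the paper does and no further. The parts you actually prove are correct: the lower bound via chains of $K_{2k+1}$-blocks (whose cycle lengths $3,\dots,2k+1$ miss residue $2$ modulo $2k$ and whose edge density tends to $(2k+1)/2$), and the observation that $k$ consecutive even lengths cover all even residues modulo $2k$, so that Conjecture~\ref{Conj-V} for a given $k$ implies $c_{2,2k}\le(2k+1)/2$. This reduction is precisely the one-line remark the authors make after stating the conjecture.

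The genuine gap is everything after that reduction. Your third and fourth paragraphs are a research programme, not a proof: you conjecture a $(k+1)$-connected core statement, you note that the bipartite case would follow from \cite{GHLM}, and you then explicitly concede that (i) the cycle-combining machinery of Lemmas~\ref{non-seperated even cycle}--\ref{two odd cycles} does not extend to producing $k$ cycle lengths simultaneously (the parity-counting trick only ever yields one even cycle among a family, hence one consecutive pair, not an arithmetic progression of length $k$), and (ii) a minimum counterexample may have separators of every size from $2$ to $k$, which would require a multi-terminal analogue of Theorem~\ref{Thm:gen} that neither you nor the paper supplies. Since Conjecture~\ref{Conj-V} is open for all $k\ge 3$, the upper bound $c_{2,2k}\le(2k+1)/2$ remains unproven for every $k\ge 3$, and your proposal does not close that gap; it only identifies, accurately, where the difficulty lies.
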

	
	Note that the case $k=2$ is given by Corollary~\ref{cor}, and Conjecture~\ref{Conj-V} (or Conjecture~\ref{Conj-SV}) would imply the above conjecture.
	For the case when $\ell=0$, it seems hard to make a reasonable prediction.
	The best bounds for $c_{0,4}$ that we are aware only give that $11/7\le c_{0,4}\leq 2$ (see \cite{DLS93} for the upper bound).
	For other related interesting problems, we direct readers to \cite{SV17,V16}.

\end{document}